\newcommand{\A}{\mathcal A}
\newcommand{\K}{\mathcal K}
\newcommand{\w}{\omega}
\newcommand{\id}{\mathrm{id}}
\newcommand{\dom}{\mathrm{dom}}
\newcommand{\Ra}{\Rightarrow}
\newcommand{\supp}{\mathrm{supp}}
\newcommand{\IN}{\mathbb N}
\newcommand{\Tau}{\mathcal T}
\newtheorem{theorem}{Theorem}[section]
\newtheorem{corollary}[theorem]{Corollary}
\newtheorem{proposition}[theorem]{Proposition}
\newtheorem{lemma}[theorem]{Lemma}
\newtheorem{example}[theorem]{Example}
\newtheorem{problem}[theorem]{Problem}
\newtheorem{claim}[theorem]{Claim}
\title[{A metrizable semitopological semilattice with non-closed partial order}]{A metrizable semitopological semilattice\\ with non-closed partial order}
\author{Taras Banakh, Serhii Bardyla, and Alex Ravsky}
\address{T.Banakh: Ivan Franko National University of Lviv (Ukraine) and Jan Kochanowski University in Kielce (Poland)}
\email{t.o.banakh@gmail.com}
\address{S.~Bardyla: Institute of Mathematics, Kurt G\"{o}del Research Center, Vienna (Austria)}
\email{sbardyla@yahoo.com}
\thanks{The second author was supported by the Austrian Science Fund FWF (Grant  I 3709-N35).}
\address{A.Ravsky: Department of Analysis, Geometry and Topology,
Pidstryhach Institute for Applied Problems of Mechanics and Mathematics
National Academy of Sciences of Ukraine}%,Naukova 3-b, Lviv, 79060, Ukraine}
\email{alexander.ravsky@uni-wuerzburg.de}
\keywords{semitopological semilattice, partial order, convergent sequence, act, semigroup}
\subjclass{54A20, 06A12, 22A26, 37B05}
\begin{document}
\begin{abstract} We construct a metrizable semitopological semilattice $X$ whose partial order $P=\{(x,y)\in X\times X:xy=x\}$ is a non-closed dense subset of $X\times X$. As a by-product we find necessary and sufficient conditions for the existence of a (metrizable) Hausdorff topology on a set, act, semigroup or semilattice, having a prescribed countable family of convergent sequences.
\end{abstract}
\maketitle

\section{Introduction}

In this paper we shall construct an example of a metrizable semitopological semilattice with non-closed partial order. 

A {\em semilattice} is a commutative semigroup $X$ whose any element $x\in X$ is an {\em idempotent} in the sense that $xx=x$. A typical example of a semilattice is any partially ordered set $X$ in which any finite non-empty set $F\subset X$ has the greatest lower bound $\inf(F)$. In this case the binary operation $X\times X\to X$, $(xy)\mapsto\inf\{x,y\}$, turns $X$ into a semilattice. 

Each semilattice $X$ carries a partial order $\le$ defined by $x\le y$ iff $xy=x$. For this partial order we have $xy=\inf\{x,y\}$.

A ({\em semi\/}){\em topological semilattice} is a semilattice $X$ endowed with a topology such that the binary operation $X\times X\to X$, $xy\mapsto xy$, is (separately) continuous. 

The continuity of the semilattice operation in a Hausdorff topological semilattice implies the following well-known fact, see \cite[VI-1.14]{Bible}.

\begin{proposition} For any Hausdorff topological semilattice $X$ the partial order $P=\{(x,y)\in X\times X:xy=x\}$ is a closed subset of $X\times X$.
\end{proposition}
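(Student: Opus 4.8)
The plan is to realize $P$ as the preimage of the diagonal under a suitable continuous map and then invoke the Hausdorff property. First I would introduce the map $g\colon X\times X\to X\times X$, $g(x,y)=(xy,x)$. Since $X$ is a topological semilattice, the semilattice operation $(x,y)\mapsto xy$ is \emph{jointly} continuous; combined with the continuity of the first coordinate projection $(x,y)\mapsto x$, this makes $g$ continuous as a map into the product.

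Next I would observe that $P=g^{-1}(\Delta_X)$, where $\Delta_X=\{(a,a):a\in X\}\subset X\times X$ is the diagonal. Indeed, $g(x,y)\in\Delta_X$ means precisely that $xy=x$, i.e. $(x,y)\in P$. Because $X$ is Hausdorff, $\Delta_X$ is a closed subset of $X\times X$. Hence $P$, being the preimage of a closed set under a continuous map, is closed in $X\times X$, as required.

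As an alternative (and to make the role of the hypotheses transparent) one can argue directly with nets: if a net $(x_\alpha,y_\alpha)_\alpha$ in $P$ converges to a point $(x,y)\in X\times X$, then $x_\alpha\to x$ and, by joint continuity of the operation, $x_\alpha y_\alpha\to xy$; but $x_\alpha y_\alpha=x_\alpha$ for every $\alpha$, so these are one and the same net, and uniqueness of limits in a Hausdorff space forces $xy=x$. Thus $(x,y)\in P$ and $P$ is closed.

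The argument is short, so I do not expect a serious obstacle; the single point that genuinely must be used is the \emph{joint} continuity of the operation (the ``topological'', as opposed to merely ``semitopological'', hypothesis), which is what guarantees the continuity of $g$, equivalently the convergence $x_\alpha y_\alpha\to xy$. This is exactly the feature removed in the present paper: in the semitopological setting only separate continuity is available, the map $g$ need no longer be continuous, and the construction below shows that $P$ can then fail to be closed.
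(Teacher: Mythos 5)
Your proof is correct. The paper itself gives no proof of this proposition --- it only cites it as a well-known fact from Gierz et al.\ --- and your argument (realizing $P$ as the preimage of the closed diagonal under the jointly continuous map $(x,y)\mapsto(xy,x)$, or equivalently the net argument) is exactly the standard reasoning behind that citation, correctly isolating joint continuity as the hypothesis that fails in the semitopological setting.
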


It is natural to ask whether this proposition remains true for Hausdorff semitopological semigroups. The following example answers this question in negative.

\begin{example} There exists a metrizable countable semitopological semilattice
 $X$ whose partial order is dense and non-closed in $X\times X$.
 \end{example}
 
 This example will be constructed in Section~\ref{s:Ex} after some preliminary work, made in Sections~\ref{s:T}--\ref{s:SL}. In Section~\ref{s:T} we establish necessary and sufficient conditions on a set $X$ and function $\ell:\dom(\ell)\to X$ defined on a subset $\dom(\ell)\subset X^\w$ ensuring that $X$ admits a (metrizable) Hausdorff topology  in which every sequence $s\in\dom(\ell)$ converges to the point $\ell(s)$. 
In Section~\ref{s:TA} we study the analogous problem for acts, i.e. sets endowed with monoids of self-maps and in Section~\ref{s:SG}, \ref{s:SL} we apply the obtained results about acts to constructing topologies with prescribed convergent sequences on  semigroups and semilattices.
More information on the closedness of the partial order in semitopological semilattices can be found in \cite[\S7]{BB}.

\section{Convergent sequences in topological spaces}\label{s:T}

Let $X$ be a set and $X^\w$ be its countable power. Elements of $X^\w$ are sequences $s=(s_n)_{n\in\w}$. Let $\ell:\dom(\ell)\to X$ be a function defined on a subset $\dom(\ell)\subset X^\w$. 

A topology $\tau$ on the set $X$ is called 
{\em $\ell$-admissible} if each sequence  $s\in\dom(\ell)$ converges to the point $\ell(s)$ in the topological space $(X,\tau)$.

Observe that the indiscrete topology $\{\emptyset,X\}$ on $X$ is $\ell$-admissible. So, the family of $\ell$-admissible topologies is not empty.
This family has the largest element. This is the topology $\tau_\ell$ consisting of all subsets $U\subset X$ such that for any sequence $s=(s_n)_{n\in\w}\in\dom(\ell)$ with $\ell(s)\in U$ the set $\{n\in\w:s_n\notin U\}$ is finite. The topology $\tau_\ell$ will be referred to as the {\em largest $\ell$-admissible topology} on  $X$. In this section we discuss the following problem.

\begin{problem} Under which conditions the largest $\ell$-admissible topology $\tau_\ell$ on $X$ is Hausdorff?
\end{problem}

Below we define two necessary conditions of the Haudorffness of the topology $\tau_\ell$.

The  function $\ell:\dom(\ell)\to X$ is defined to be  
\begin{itemize}
\item {\em $T_1$-separating} if for any sequence $s\in\dom(\ell)$ any any point $x\in X$ with $x\ne\ell(s)$ the set $\{n\in\w:s_n=x\}$ is finite;
\item {\em $T_2$-separating} if $\ell$ is $T_1$-separating and for any sequences $s,t\in\dom(\ell)$ with $\ell(s)\ne\ell(t)$ there exists a finite set $F\subset\w$ such that $s_n\ne t_m$ for any $n,m\in\w\setminus F$.
\end{itemize}

We say that a topology $\tau$ on a set $T$  {\em satisfies the separation axiom} $T_1$ if each finite subset of $T$ is $\tau$-closed in $T$. In this case we say that $(T,\tau)$ is a {\em $T_1$-space}.

\begin{lemma}\label{l:T1} The function $\ell$ is $T_1$-separating if and only if the topology $\tau_\ell$ satisfies the separation axiom $T_1$.
\end{lemma}

\begin{proof} Assume that $\ell$ is $T_1$-separating and take any finite set $F\subset X$. To show that the set $U:=X\setminus F$ belongs to the topology $\tau_\ell$, it suffices to check that for every $s\in \dom(\ell)$ with $\ell(s)\in U$ the set $\{n\in\w:s(n)\notin U\}$ is finite. By the $T_1$-separating property of $\ell$, for every $x\in F$ the set $\Omega_x=\{n\in\w:s_n=x\}$ is finite and so is the set $\{n\in\w:s_n\notin U\}=\bigcup_{x\in F}\Omega_x$.

Now assuming that each finite subset $F\subset X$ is closed in the topology $\tau_\ell$, we shall prove that the function $\ell$ is $T_1$-separating. Given any point $x\in X\setminus\{\ell(s)\}$, observe that $\ell(s)\in X\setminus\{x\}\in\tau_\ell$ implies that the set $\{n\in\w:s_n\notin X\setminus\{x\}\}=\{n\in\w:s_n=x\}$ is finite, which means that $\ell$ is $T_1$-separating.
\end{proof}

\begin{lemma}\label{l:T2a} If the topology $\tau_\ell$ is Hausdorff, then the function $\ell$ is $T_2$-separating.
\end{lemma}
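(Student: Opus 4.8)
The plan is to prove the contrapositive by unpacking what it means for $\ell$ to fail $T_2$-separation, and showing that each type of failure obstructs the Hausdorffness of $\tau_\ell$. There are two ways $\ell$ can fail to be $T_2$-separating: either it fails to be $T_1$-separating, or it is $T_1$-separating but the second clause fails. The first case is immediate: by Lemma~\ref{l:T1}, if $\ell$ is not $T_1$-separating then $\tau_\ell$ is not even a $T_1$-space, hence certainly not Hausdorff, since every Hausdorff space is $T_1$. So the substance of the argument is the second case.

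So assume $\ell$ is $T_1$-separating but the second clause of $T_2$-separation fails: there are sequences $s,t\in\dom(\ell)$ with $\ell(s)\ne\ell(t)$ such that for \emph{every} finite set $F\subset\w$ there exist $n,m\in\w\setminus F$ with $s_n=t_m$. My goal is to show that the points $\ell(s)$ and $\ell(t)$ cannot be separated by disjoint $\tau_\ell$-open sets, contradicting Hausdorffness. The key observation is that the failure of the clause produces \emph{infinitely many} coincidences $s_n=t_m$ that cannot be confined to any finite set of indices. Concretely, I would extract from this an infinite set of common values that the two sequences hit arbitrarily far out.

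The heart of the matter is the following. Suppose toward a contradiction that $U,V\in\tau_\ell$ are disjoint open sets with $\ell(s)\in U$ and $\ell(t)\in V$. By definition of $\tau_\ell$, since $\ell(s)\in U$, the set $\{n:s_n\notin U\}$ is finite, so $s_n\in U$ for all but finitely many $n$; similarly $t_m\in V$ for all but finitely many $m$. Let $F$ be the finite set of indices excluded by these two conditions together. Applying the negated clause to this $F$ yields indices $n,m\in\w\setminus F$ with $s_n=t_m$; but then $s_n\in U$ and $t_m\in V$ while $s_n=t_m$, so $U\cap V\ne\emptyset$, contradicting disjointness. This gives the desired conclusion.

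The main obstacle, such as it is, is purely bookkeeping: one must be careful that the finite set $F$ chosen depends only on the given open sets $U,V$ (via the two finiteness conditions from the definition of $\tau_\ell$), so that the negated clause can legitimately be applied to \emph{this particular} $F$ to manufacture a coincidence of values outside $F$. I expect no genuine difficulty beyond making sure the quantifiers line up, since the definition of $\tau_\ell$ hands us exactly the ``eventually in $U$'' and ``eventually in $V$'' behavior that collides with the forced coincidences $s_n=t_m$ guaranteed by the failure of $T_2$-separation. It is worth remarking that the $T_1$-separating hypothesis inside the definition of $T_2$-separating is handled entirely by the first case above, so in the second case I may freely use it but do not actually need it for the separation argument itself.
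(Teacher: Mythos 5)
Your argument is correct and is essentially the paper's own proof: the paper also invokes Lemma~\ref{l:T1} for the $T_1$-separating part and then, given disjoint $U,V\in\tau_\ell$ around $\ell(s)$ and $\ell(t)$, uses the finiteness of $F=\{n:s_n\notin U\}$ and $E=\{m:t_m\notin V\}$ to conclude $s_n\ne t_m$ for $n,m\notin F\cup E$. Your contrapositive packaging changes nothing of substance.
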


\begin{proof} By Lemma~\ref{l:T1}, the function $\ell$ is $T_1$-separating. To prove that $\ell$ is $T_2$-separating, take two sequences $s,t\in \dom(\ell)$ with $\ell(s)\ne\ell(t)$. By the Hausdorff property of the topology $\tau_\ell$, there are disjoint open sets $U,V\in\tau_\ell$ such that $\ell(s)\in U$ and $\ell(t)\in V$. By the definition of the topology $\tau_\lambda$, the sets $F:=\{n\in\w:s_n\notin U\}$ and $E:=\{m\in\w:t_m\notin V\}$ are finite. Then $s_n\ne t_m$ for any $n,m\in\w\setminus(F\cup E)$.
\end{proof}

Now we shall prove that the largest $\ell$-admissible topology $\tau_\ell$ is Hausdorff if the function $\ell$ is $T_2$-separating and $\dom(\ell)$ is at most countable. 

For a sequence $s\in\dom(\ell)$ and a subset $I\subset\w$ let $s[I]:=\{s_n:n\in I\}$ and $s[I]^*:=s[I]\cup\{\ell(s)\}$.

\begin{lemma}\label{l:T2b} Assume that the function $\ell:\dom(\ell)\to X$ is $T_2$-separating. Let $s\in\dom(\ell)$ and $I\subset\w$.
\begin{enumerate}
\item the set $s[I]^*$ is closed in $(X,\tau_\ell)$;
\item each point $x\in s[I]^*\setminus\{\ell(s)\}$ is isolated in $s[I]^*$;
\item the subspace $s[I]^*$ of $(X,\tau_\ell)$ is compact and Hausdorff.
\end{enumerate}
\end{lemma}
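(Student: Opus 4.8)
The three statements build on one another, so the plan is to prove them in order, with part (1) being the crux and parts (2), (3) following from it with little extra work.

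For (1), I would show that the complement $U:=X\setminus s[I]^*$ belongs to $\tau_\ell$. Fix an arbitrary $t\in\dom(\ell)$ with $\ell(t)\in U$; by the definition of $\tau_\ell$ it suffices to prove that $\{n\in\w:t_n\in s[I]^*\}$ is finite. Since $\ell(t)\in U$ gives $\ell(t)\ne\ell(s)$, the $T_1$-separating property makes $\{n:t_n=\ell(s)\}$ finite, so it remains to bound $\{n:t_n\in s[I]\}$. Here I would invoke the $T_2$-separating property: as $\ell(s)\ne\ell(t)$, there is a finite set $F\subset\w$ with $s_m\ne t_n$ for all $m,n\in\w\setminus F$. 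Consequently, whenever $t_n=s_m$ with $m\in I$ and $n\notin F$, one is forced to have $m\in F$, so such a $t_n$ takes one of the finitely many values in $\{s_m:m\in F\cap I\}$. Each of these values lies in $s[I]$ and hence differs from $\ell(t)$ (because $\ell(t)\in U$ implies $\ell(t)\notin s[I]$), so the $T_1$-separating property again bounds each corresponding index set. Combining these finitely many finite sets with $F$ yields the claim. This interlocking use of both separation hypotheses is the step I expect to be the main obstacle; everything else is essentially bookkeeping.

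Statement (2) then follows almost for free. For a point $x\in s[I]^*\setminus\{\ell(s)\}$, put $J:=\{n\in I:s_n\ne x\}$. Using $\ell(s)\ne x$ one checks that $s[J]^*=s[I]^*\setminus\{x\}$, and this set is closed in $(X,\tau_\ell)$ by part (1); therefore $\{x\}=s[I]^*\setminus s[J]^*$ is relatively open in $s[I]^*$, i.e.\ $x$ is isolated.

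For (3), Hausdorffness is immediate from the $T_1$-ness of $\tau_\ell$ (Lemma~\ref{l:T1}, since $\ell$ is in particular $T_1$-separating) together with part (2): two distinct isolated points are separated by their singleton neighbourhoods, while $\ell(s)$ and any isolated $x$ are separated by $\{x\}$ and the complement of the closed singleton $\{x\}$. Compactness I would derive directly from the definition of $\tau_\ell$: given any cover of $s[I]^*$ by relatively open sets, choose a member containing $\ell(s)$ of the form $O\cap s[I]^*$ with $O\in\tau_\ell$ and $\ell(s)\in O$. Then $\{n\in\w:s_n\notin O\}$ is finite, so $O$ omits only finitely many points of $s[I]^*$; covering each of these by one further member of the cover produces a finite subcover.
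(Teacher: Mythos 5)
Your proposal is correct and follows essentially the same route as the paper: part (1) via the interlocking use of the $T_2$- and $T_1$-separating properties to bound $\{n:t_n\in s[I]^*\}$ (the paper packages the finitely many exceptional values into a set $E=\{\ell(s)\}\cup\{s_n:n\in\Omega\}\setminus\{\ell(t)\}$, which is exactly your bookkeeping), part (2) via the identity $s[I]^*\setminus\{x\}=s[J]^*$ with $J=I\setminus s^{-1}(x)$, and part (3) by separating points with clopen singletons and extracting a finite subcover from a neighborhood of $\ell(s)$. No gaps.
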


\begin{proof} 1. The inclusion $X\setminus s[I]^*\in \tau_\ell$ will follow as soon as 
we show that for any sequence $t\in \dom(\ell)$ with $\ell(t)\notin s[I]^*$ the set $\{n\in\w:t_n\in s[I]^*\}$ is finite.  By the $T_2$-separating property of $\ell$, there exists a finite set $\Omega\subset\w$ such that $s_n\ne t_m$ for any $n,m\in\w\setminus\Omega$.  Consider the finite set $E=\{\ell(s)\}\cup\{s_n:n\in\Omega\}\setminus\{\ell(t)\}$. By  the $T_1$-separating property of $\ell$, the set $\Lambda=\Omega\cup\{n\in\w:t_n\in E\}$ is finite. Then the set $\{n\in\w:t_n\in s[I]^*\}\subset \Lambda$ is finite, too. 
\smallskip

2. Given any point $x\in s[I]^*\setminus\{\ell(s)\}$, observe that $s[I]^*\setminus\{x\}=s[J]^*$ where $J=I\setminus s^{-1}(x)$.

By Lemma~\ref{l:T2b}(1), the subspace $s[J]^*$ is closed in $(X,\tau_\ell)$ and then the singleton $\{x\}=s[I]^*\setminus s[J]^*$ is open in $s[I]^*$.
\smallskip

3. The compactness of $s[I]^*$ follows from the fact that each neighborhood $U\in\tau_\ell$ of $\ell(s)$ contains all but finitely many points of the set $s[I]$. To see that $s[I]^*$ is Hausdorff, take any two distinct points $x,y\in s[I]^*$. One of these points is distinct from the limit point $\ell(s)$ of the sequence $s$ and we lose no generality assuming that $x\ne\ell(s)$. By Lemmas~\ref{l:T1} and \ref{l:T2b}(2), the singleton $U_x=\{x\}$  closed-and-open in $s[I]^*$ and so is its complement $U_y=s[I]^*\setminus U_x$. Then $U_x,U_y$ are disjoint neighborhoods of the points $x,y$ in $s[I]^*$, witnessing that the subspace $s[I]^*$ of $(X,\tau_\ell)$ is Hausdorff.
\end{proof}

\begin{corollary}\label{c:T2} If the function $\ell:\dom(\ell)\to X$ is $T_2$-separating, then for every finite subset $S\subset \dom(\lambda)$ the subspace $S[\w]^*=\bigcup_{s\in S}s[\w]^*$ of $(X,\tau_\ell)$ is compact, Hausdorff, and closed in $(X,\tau_\ell)$.
\end{corollary}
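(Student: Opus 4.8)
The plan is to deduce Corollary~\ref{c:T2} directly from Lemma~\ref{l:T2b} by a finite-union argument. Write $S=\{s^1,\dots,s^k\}$ and set $A_i:=s^i[\w]^*$, so that $S[\w]^*=\bigcup_{i=1}^k A_i$. By Lemma~\ref{l:T2b}(1) each $A_i$ is closed in $(X,\tau_\ell)$, hence the finite union $S[\w]^*$ is closed as well; this settles the closedness claim immediately.

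For compactness I would likewise exploit that a finite union of compact subspaces of any topological space is compact: by Lemma~\ref{l:T2b}(3) each $A_i$ is a compact subspace of $(X,\tau_\ell)$, and a finite union of compact sets is compact, so $S[\w]^*$ is compact. (Alternatively one can argue directly: every $\tau_\ell$-neighborhood of each limit point $\ell(s^i)$ omits only finitely many points of $s^i[\w]$, so from any open cover one extracts a finite subcover by first covering the finitely many limit points $\ell(s^1),\dots,\ell(s^k)$ and then the finitely many leftover points.)

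The only point requiring genuine care is Hausdorffness, since Hausdorffness of each piece $A_i$ does not automatically pass to a finite union of subspaces of an ambient space that may itself fail to be Hausdorff. Here I would take two distinct points $x,y\in S[\w]^*$ and separate them using the structure established in Lemma~\ref{l:T2b}. The key observation is that a point of $S[\w]^*$ is either one of the finitely many limit points $\ell(s^i)$, or else, by $T_1$-separation (Lemma~\ref{l:T1}) together with part~(2), is isolated in its own $A_i$. If at least one of $x,y$ — say $x$ — is distinct from every $\ell(s^i)$, then $\{x\}$ is closed by Lemma~\ref{l:T1} and I would argue it is also open in $S[\w]^*$: for each $i$ with $x\in A_i$ part~(2) gives openness in $A_i$, and for each $i$ with $x\notin A_i$ the closed set $A_i$ avoids $x$; combining these finitely many relatively open neighborhoods yields a relatively open singleton $\{x\}$, whose complement is then a neighborhood of $y$. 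The remaining case is $x=\ell(s^i)$ and $y=\ell(s^j)$ for $i\ne j$ with $\ell(s^i)\ne\ell(s^j)$, which I would handle via the global Hausdorffness-type estimate supplied by $T_2$-separation: I expect this case to be the main obstacle, and I would resolve it by the same kind of neighborhood-trimming used in the proof of Lemma~\ref{l:T2b}(1), removing from suitable $\tau_\ell$-open sets the finitely many exceptional indices produced by $T_2$-separation so as to obtain disjoint $\tau_\ell$-open sets separating $\ell(s^i)$ from $\ell(s^j)$, whose traces on $S[\w]^*$ then separate $x$ from $y$.

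Thus the entire corollary reduces to the single lemma, with the finite-union bookkeeping (especially the mixed case of isolated versus limit points) being the only nontrivial step; no new hypothesis beyond $T_2$-separation of $\ell$ is needed, and in particular countability of $\dom(\ell)$ plays no role here.
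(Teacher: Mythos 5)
Your reduction of closedness and compactness to finite unions is fine, and your treatment of the case where at least one of the two points differs from every limit $\ell(s^i)$ is also correct. The genuine gap is exactly in the case you single out as the main obstacle, $x=\ell(s^i)\ne\ell(s^j)=y$: there you propose to produce \emph{disjoint $\tau_\ell$-open subsets of $X$} around the two limit points and then take traces on $S[\w]^*$. This cannot be done from $T_2$-separation alone. Membership of a set $U\ni\ell(s^i)$ in $\tau_\ell$ imposes a cofiniteness condition for \emph{every} sequence in $\dom(\ell)$ whose limit lies in $U$, not only for the finitely many sequences in $S$; and the paper's Hausdorff-gap example exhibits a $T_2$-separating $\ell$ (with uncountable domain) whose two limit points $0$ and $1$ admit no disjoint $\tau_\ell$-neighbourhoods whatsoever, even though every finite $S[\w]^*$ is still a Hausdorff subspace. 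Since you correctly insist that countability of $\dom(\ell)$ plays no role in the corollary, you also cannot fall back on Lemma~\ref{l:T2n} to get Hausdorffness of $\tau_\ell$. So the step ``obtain disjoint $\tau_\ell$-open sets separating $\ell(s^i)$ from $\ell(s^j)$'' would fail in general.

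The paper sidesteps the construction of neighbourhoods entirely: it proves that the union of two closed Hausdorff subspaces $A,B$ of an arbitrary space $T$ is Hausdorff, by writing $\Delta_{A\cup B}=\Delta_A\cup\Delta_B$ and observing that each summand is closed in $T\times T$ (being the diagonal of a Hausdorff subspace that is closed in $T$); Corollary~\ref{c:T2} then follows from Lemma~\ref{l:T2b} by induction on $|S|$. If you want to keep a hands-on argument, you must only arrange that the \emph{traces} on $S[\w]^*$ of two open sets are disjoint, which is strictly weaker than disjointness of the open sets themselves. For instance, for $S=\{s,t\}$ with $x=\ell(s)\ne\ell(t)=y$, choose cofinite sets $I,J\subset\w$ (removing the finite exceptional set from $T_2$-separation together with the finitely many indices where $s$ hits $\ell(t)$ or $t$ hits $\ell(s)$) so that $s[I]^*\cap t[J]^*=\emptyset$; the complements of the closed sets $t[J]^*$ and $s[I]^*$ are $\tau_\ell$-open, contain $x$ and $y$ respectively, and their traces on $S[\w]^*$ meet only in a finite set avoiding $x$ and $y$, which Lemma~\ref{l:T1} lets you delete. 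Either repair closes the gap; as written, your argument for the limit-point case does not.
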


This corollary follows from Lemma~\ref{l:T2b} and the following known fact.

\begin{lemma} The union $A\cup B$ of two closed Hausdorff subspaces of a topological space $T$ is Hausdorff.
\end{lemma}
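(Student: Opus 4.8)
The plan is to work inside $Z:=A\cup B$ with its subspace topology and separate an arbitrary pair of distinct points $x,y\in Z$. The first observation is that $A=A\cap Z$ and $B=B\cap Z$ are closed in $Z$ (being closed in $T$), so the complements $Z\setminus A$ and $Z\setminus B$ are open in $Z$; moreover every set open in $A$ (resp.\ $B$) is the trace on $A$ (resp.\ $B$) of a set open in $Z$. These two facts --- the openness of the complements, coming from closedness, and the possibility of extending relatively open sets --- are the only tools I expect to need, apart from the Hausdorffness of $A$ and of $B$.

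I would first dispose of the easy case, in which one of $x,y$ lies in $A\setminus B$ and the other in $B\setminus A$: then $Z\setminus B$ and $Z\setminus A$ are disjoint open sets of $Z$ containing them. A short membership argument shows this already reduces the problem to the situation where both points lie in $A$ (the symmetric case of both lying in $B$ being identical): indeed, if the points lie neither both in $A$ nor both in $B$, then one must lie in $B\setminus A$ and the other in $A\setminus B$, which is precisely the easy case.

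So assume $x,y\in A$. Using the Hausdorffness of $A$, I choose disjoint $A$-open sets $U\ni x$ and $V\ni y$ and extend them to $Z$-open sets $\hat U,\hat V$ with $\hat U\cap A=U$ and $\hat V\cap A=V$. The main obstacle is that $\hat U\cap\hat V$ need not be empty: it can only meet $B\setminus A$, since on $A$ it reduces to $U\cap V=\emptyset$. I resolve this according to how $x,y$ meet $B$. If neither of $x,y$ lies in $B$, I replace $\hat U,\hat V$ by $\hat U\cap(Z\setminus B)$ and $\hat V\cap(Z\setminus B)$, which still contain $x,y$ and whose intersection lies in $A$, hence is empty. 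If exactly one of them, say $y$, avoids $B$, I keep $\hat U$ for $x$ and use $\hat V\cap(Z\setminus B)$ for $y$; their intersection again lands in $A$ and is empty. Finally, if both $x,y\in A\cap B$, I additionally invoke the Hausdorffness of $B$ to get $Z$-open sets $\hat U',\hat V'$ whose traces on $B$ are disjoint and contain $x,y$, and take $\hat U\cap\hat U'$ and $\hat V\cap\hat V'$; any common point would lie in $A$, where $U\cap V=\emptyset$, or in $B$, where the traces are disjoint, so there is none. In every case $x$ and $y$ are separated, and the hardest point is exactly this last one, where the overlap on $B\setminus A$ forces one to use the Hausdorffness of $B$ and not only that of $A$.
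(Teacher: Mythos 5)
Your argument is correct, but it takes a genuinely different route from the paper. The paper proves the lemma in one stroke via the diagonal criterion: writing $T=A\cup B$, it observes that $\Delta_T=\Delta_A\cup\Delta_B$, and that $\Delta_A$ is closed in $A\times A$ (Hausdorffness of $A$) and hence in $T\times T$ (closedness of $A$ in $T$), likewise for $\Delta_B$; a finite union of closed sets is closed, so $\Delta_T$ is closed and $T$ is Hausdorff. Your proof instead separates an arbitrary pair of points directly, with a case analysis on how $x,y$ sit relative to $A$ and $B$; your reduction to the case $x,y\in A$ is sound (if the points are not both in $A$ and not both in $B$, one lies in $A\setminus B$ and the other in $B\setminus A$, and the open sets $Z\setminus B$, $Z\setminus A$ separate them), and in the hardest subcase $x,y\in A\cap B$ you correctly intersect extensions of separating sets from both $A$ and $B$, noting that a common point would have to lie in $A$ or in $B$ and is excluded in either case. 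What the paper's approach buys is brevity and the avoidance of any case analysis, at the cost of invoking the equivalence between Hausdorffness and closedness of the diagonal; what your approach buys is that it works straight from the definitions of the subspace topology and of the $T_2$ axiom, and it makes visible exactly where each hypothesis (closedness of $A$ and $B$, Hausdorffness of $A$ and of $B$) is used. Both proofs are complete.
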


\begin{proof} We lose no generality assuming that $T=A\cup B$. The Hausdorff property of the space $T=A\cup B$ will follow as soon as we check that its diagonal $\Delta_T=\{(x,x):x\in T\}$ is closed in $T\times T$. For this observe that $\Delta_T=\Delta_A\cup\Delta_B$. Since the space $A$ is Hausdorff and closed in $T$, its diagonal $\Delta_A$ is closed in $A\times A$ and in $T\times T$. By analogy, the diagonal $\Delta_B$ is closed in $B\times B$ and in $T\times T$. Then the union $\Delta_T=\Delta_A\cup\Delta_B$ is closed in $T\times T$ and the space $T$ is Hausdorff.
\end{proof}

We say that the topology $\tau$ of a topological space $X$ is {\em generated} by a family $\K$ of subspaces of $X$ if a set $U\subset X$ is open in $X$ if any only if for every $K\in\K$ the intersection $U\cap K$ is open in the subspace topology of $K$.
A topology $\tau$ on a set $X$ is called {\em a $k_\w$-topology} if it is generated by a countable family of compact subsets of the topological space $(X,\tau)$.

\begin{lemma}\label{l:T2n} If the function $\ell:\dom(\ell)\to X$ is $T_2$-separating and $\dom(\ell)$ is at most countable, then the topology $\tau_\ell$ is Hausdorff and normal. Moreover, $\tau_\lambda$ is a $k_\w$-topology, generated by the countable family $\K=\{s[\w]^*:s\in\dom(\ell)\}$ of compact sets in $(X,\tau_\ell)$.
\end{lemma}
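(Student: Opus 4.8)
The plan is to first pin down the $k_\w$-structure, and then to deduce normality (whence, together with the $T_1$-axiom, Hausdorffness) by a coherent Urysohn--Tietze construction along an increasing exhaustion by the compact sets furnished by Corollary~\ref{c:T2}. Since $\dom(\ell)$ is at most countable, the family $\K=\{s[\w]^*:s\in\dom(\ell)\}$ is countable, and by Lemma~\ref{l:T2b}(1,3) each $K\in\K$ is a compact Hausdorff subspace that is closed in $(X,\tau_\ell)$. To see that $\tau_\ell$ is generated by $\K$, the only nontrivial implication is that $U\cap K$ being open in $K$ for every $K\in\K$ forces $U\in\tau_\ell$. Given $s\in\dom(\ell)$ with $\ell(s)\in U$, the set $U\cap s[\w]^*$ is an open neighborhood of $\ell(s)$ in $s[\w]^*$; as $\tau_\ell$ is $\ell$-admissible, the sequence $s$ converges to $\ell(s)$ inside the subspace $s[\w]^*$ (all its terms and its limit lie there), so $\{n\in\w:s_n\notin U\}$ is finite, which is exactly the defining condition for $U\in\tau_\ell$. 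This already establishes the $k_\w$-claim.

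Next I would fix an enumeration $\dom(\ell)=\{s^i:i\in\w\}$ and set $X_n:=\bigcup_{i<n}s^i[\w]^*$. By Corollary~\ref{c:T2} each $X_n$ is compact, Hausdorff and closed in $(X,\tau_\ell)$, and $X_n\subseteq X_{n+1}$. From the generation statement one extracts the working criterion that a set $U\subseteq X$ is $\tau_\ell$-open if and only if $U\cap X_n$ is open in $X_n$ for every $n$; call this $(\star)$. Its reverse implication is proved exactly as above, using that $s^i$ converges to $\ell(s^i)$ in $X_{i+1}\supseteq s^i[\w]^*$. Note also that every point of $X\setminus\bigcup_nX_n$ is isolated, since its singleton meets no $s[\w]^*$ and hence lies in $\tau_\ell$; such points impose no condition in $(\star)$.

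Now let $A,B$ be disjoint $\tau_\ell$-closed sets. I would build continuous maps $f_n:X_n\to[0,1]$ with $f_n(A\cap X_n)\subseteq\{0\}$, $f_n(B\cap X_n)\subseteq\{1\}$ and $f_{n+1}|_{X_n}=f_n$. At the inductive step, the map defined on the closed subset $X_n\cup(A\cap X_{n+1})\cup(B\cap X_{n+1})$ of $X_{n+1}$ by $f_n$, by $0$, and by $1$ on the three pieces respectively is well defined (the pieces agree on overlaps by the inductive hypothesis) and continuous by the pasting lemma; since $X_{n+1}$ is compact Hausdorff, hence normal, the Tietze theorem extends it to the desired $f_{n+1}$. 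The union $f:=\bigcup_nf_n$ on $\bigcup_nX_n$, extended by $0$ on the remaining points of $A$, by $1$ on the remaining points of $B$, and by $0$ elsewhere, is continuous: for open $V\subseteq[0,1]$ the preimage $f^{-1}(V)$ meets each $X_n$ in the open set $f_n^{-1}(V)$, so $f^{-1}(V)\in\tau_\ell$ by $(\star)$. Then $f^{-1}\big([0,\tfrac12)\big)$ and $f^{-1}\big((\tfrac12,1]\big)$ are disjoint open sets containing $A$ and $B$, proving normality; Hausdorffness then follows because $\tau_\ell$ is $T_1$ by Lemma~\ref{l:T1}, so singletons are closed and normality separates any two distinct points.

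I expect the one real obstacle to be arranging the construction so that the \emph{limit} function $f$ is continuous on all of $X$, and this is precisely what $(\star)$ buys: the coherence $f_{n+1}|_{X_n}=f_n$ is exactly what lets $(\star)$ apply to $f^{-1}(V)$. The feature that makes the induction go through—each $X_n$ being compact Hausdorff and closed, hence normal and a legitimate site for Tietze extension—is supplied by Corollary~\ref{c:T2}, which crucially does \emph{not} presuppose Hausdorffness of $\tau_\ell$; thus there is no circularity in deducing Hausdorffness only at the very end. The finite case $\dom(\ell)$ is subsumed, as the exhaustion then stabilizes.
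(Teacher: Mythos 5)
Your proof is correct, and it shares the paper's overall skeleton: exhaust $X$ by the compact Hausdorff closed sets $K_n=\bigcup_{i\le n}s_i[\w]^*$ supplied by Corollary~\ref{c:T2}, run an induction along this exhaustion, and glue the results using the fact that $\tau_\ell$ is determined by its traces on the $K_n$. The engine of the inductive step, however, is genuinely different. The paper works purely with open sets: at stage $n$ it uses normality of the compact Hausdorff space $K_n$ to choose open sets $V_n\supseteq A_n$ and $W_n\supseteq B_n$ with disjoint closures, feeds those closures into the closed sets of stage $n+1$, and finally takes $V=\bigcup_nV_n$ and $W=\bigcup_nW_n$. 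You instead build a coherent sequence of Urysohn functions $f_n$ by Tietze extension on each compact piece and glue them to a continuous $f:X\to[0,1]$ via your criterion $(\star)$. Your route costs an appeal to the Tietze theorem but produces a continuous separating function outright (hence functional normality), and it avoids the closure bookkeeping ($\overline V_n\cap\overline W_n=\emptyset$, $A_{n+1}=\overline V_n\cup(K_{n+1}\cap A)$, and so on) that the paper needs to keep its inductive hypotheses consistent; the paper's route is more elementary, needing only normality of compact Hausdorff spaces. One further point in your favour: the paper dismisses the $k_\w$ generation claim with ``the definition of the topology $\tau_\ell$ ensures\dots'', whereas you actually prove the nontrivial direction --- that $U\cap K$ being open in every $K\in\K$ forces $U\in\tau_\ell$ --- by restricting the convergence $s\to\ell(s)$ to the subspace $s[\w]^*$. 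Since this is exactly the fact both proofs invoke to conclude that the glued object is open (respectively, continuous), making it explicit is a genuine improvement rather than a detour.
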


\begin{proof} The definition of the topology $\tau_\ell$ ensures that it is generated by the countable family $\K$. Now we show that the topology $\tau_\ell$ is Hausdorff and normal. By Lemma~\ref{l:T1}, the topology $\tau_\ell$ satisfies the separation axiom $T_1$. Now it suffices to check that this topology is normal. From now on, we consider $X$ as a topological space endowed with the topology $\tau_\ell$. 

Given two disjoint closed sets $A,B\subset X$ we should find two disjoint open sets $V,W\subset X$ such that $A\subset V$ and $B\subset W$.

 Let $\dom(\ell)=\{s_n\}_{n\in\w}$ be an enumeration of the countable set $\dom(\ell)$. By Corollary~\ref{c:T2}, for every $n\in\w$ the subspace $K_n:=\bigcup_{i\le n}s_i[\w]^*$ of $(X,\tau_\ell)$ is compact, Hausdorff, and closed in $(X,\tau_\ell)$. Let $A_0:=A\cap K_0$ and $B_0:=B\cap K_0$. By  induction we shall construct sequences $(A_n)_{n\in\w}$, $(B_n)_{n\in\w}$, $(V_n)_{n\in\w}$, $(W_n)_{n\in\w}$ of subsets in $X$ such that for every $n\in\w$ the following conditions are satisfied:
\begin{enumerate}
\item the sets $A_n,B_n$ are disjoint and closed in $K_n$;
\item $A_n\subset V_n\subset K_n$ and $B\subset W_n\subset K_n$;
\item the sets $V_n,W_n$ are open in $K_n$ and $\overline V_n\cap\overline W_n=\emptyset$;
\item $A_{n+1}=\overline{V}_n\cup(K_{n+1}\cap A)$ and  $B_{n+1}=\overline W_n\cup(K_{n+1}\cap B)$;
\item $A_{n+1}\cap B_{n+1}=\emptyset$. 
\end{enumerate} 
Assume that for some $n\in\w$ disjoint closed sets $A_n,B_n\subset K_n$ with $K_n\cap A\subset A_n$ and $K_n\cap B\subset B_n$ have been constructed. By the normality of the compact Hausdorff space $K_n$, there are open sets $V_n,W_n\subset K_n$ satisfying the conditions (2),(3). Define the sets $A_{n+1},B_{n+1}$ by the formula (4) and observe that 
$$A_{n+1}\cap B_{n+1}=(A_{n+1}\cap B_{n+1}\cap K_n)\cup(A_{n+1}\cap B_{n+1}\cap(K_{n+1}\setminus K_n)\subset(\overline V_n\cap\overline W_n)\cup(A\cap B)=\emptyset.$$
After completing the inductive construction, observe that $V:=\bigcup_{n\in\w}V_n$ and $W=\bigcup_{n\in\w}W_n$ are disjoint open sets in $X$ such that $A\subset V$ and $B\subset W$.
\end{proof}

The following theorem is the main result of this section.

\begin{theorem}\label{t:T} For a set $X$ and function $\ell:\dom(\ell)\to X$ defined on a countable subset $\dom(\ell)\subset X^\w$ the following conditions are equivalent:
\begin{enumerate}
\item $X$ admits a metrizable topology $\tau$ in which every sequence $s\in\dom(\ell)$ converges to the point $\ell(s)$.
\item $X$ admits a Hausdorff topology $\tau$ in which every sequence $s\in\dom(\ell)$ converges to the point $\ell(s)$.
\item The following two properties are satisfied:
\begin{itemize}
\item[$(3a)$] for any $s\in\dom(\ell)$ and $x\in X$ with $s\ne\ell(s)$ the set $\{n\in\w:s_n=x\}$ is finite;
\item[$(3b)$] for any sequences $s,t\in\dom(\ell)$ with $\ell(s)\ne\ell(t)$ there exists a finite set $F\subset\w$ such that $s_n\ne t_m$ for any $n,m\in\w\setminus F$.
\end{itemize}
\end{enumerate}
\end{theorem}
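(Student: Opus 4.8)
The plan is to prove the cyclic chain of implications $(1)\Rightarrow(2)\Rightarrow(3)\Rightarrow(1)$, so that essentially all of the work falls on the last implication. The implication $(1)\Rightarrow(2)$ is immediate, since every metrizable topology is Hausdorff. For $(2)\Rightarrow(3)$ I would exploit that $\tau_\ell$ is the \emph{largest} $\ell$-admissible topology: any $\ell$-admissible topology $\tau$ satisfies $\tau\subseteq\tau_\ell$, because for $U\in\tau$ with $\ell(s)\in U$ the convergence $s_n\to\ell(s)$ forces $s_n\in U$ for all but finitely many $n$, which is exactly the membership $U\in\tau_\ell$. Hence if some $\ell$-admissible topology is Hausdorff, the finer topology $\tau_\ell$ is Hausdorff as well, and Lemma~\ref{l:T2a} yields that $\ell$ is $T_2$-separating, which is precisely the conjunction of $(3a)$ and $(3b)$.

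The heart of the matter is $(3)\Rightarrow(1)$. Conditions $(3a)$ and $(3b)$ say that $\ell$ is $T_2$-separating, so Lemma~\ref{l:T2n} makes $\tau_\ell$ Hausdorff and normal, and Lemma~\ref{l:T1} makes it $T_1$; thus $(X,\tau_\ell)$ is Tychonoff. The key difficulty — and the main obstacle — is that $\tau_\ell$ itself need \emph{not} be metrizable: a bouquet of countably many sequences sharing a single limit produces a non-first-countable fan, so one must resist trying to metrize $\tau_\ell$ and instead produce a possibly \emph{coarser} metrizable topology that still retains the prescribed convergences. My plan is to build such a topology from a countable family of convergence-respecting real functions. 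Put $Y:=\bigcup_{s\in\dom(\ell)}s[\w]^*$; since $\dom(\ell)$ is countable, $Y$ is countable. Because $(Y,\tau_\ell|_Y)$ is Tychonoff and $T_1$, for each of the countably many pairs of distinct points $a\neq b$ in $Y$ I can choose a $\tau_\ell$-continuous $f_{ab}\colon Y\to[0,1]$ with $f_{ab}(a)=0$ and $f_{ab}(b)=1$; this is where normality is genuinely used, as two distinct limit points cannot be separated by any indicator function. Enumerate the resulting family as $(f_j)_{j\in\IN}$; since $\tau_\ell$ is $\ell$-admissible, each $f_j$ satisfies $f_j(s_n)\to f_j(\ell(s))$ for every $s\in\dom(\ell)$.

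Next I would set $d_Y(x,y):=\sum_{j\in\IN}2^{-j-1}\,|f_j(x)-f_j(y)|$ on $Y$. This is a metric bounded by $1$, because the family $(f_j)$ separates the points of $Y$, and it is $\ell$-admissible on $Y$: for a fixed $s$ the pointwise convergences $f_j(s_n)\to f_j(\ell(s))$ combine, via the standard geometric-tail estimate in which only finitely many summands matter for a given $\varepsilon$, to give $d_Y(s_n,\ell(s))\to 0$. It then remains to metrize all of $X$. As every point of $X\setminus Y$ is irrelevant to the convergence requirements, I would extend $d_Y$ to a metric $d$ on $X$ by declaring such points isolated, setting $d(x,y):=1$ whenever $x\neq y$ and $\{x,y\}\not\subseteq Y$ while keeping $d=d_Y$ on $Y$; the triangle inequality is immediate from $d_Y\le 1$. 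The induced metric topology is metrizable and $\ell$-admissible, which closes the cycle.

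The step I expect to demand the most care is exactly this passage from $\tau_\ell$ to the coarser metric: one must verify that pointwise convergence of the bounded coordinates $f_j$ reproduces each prescribed limit in the weighted-sum metric, and that enumerating the (many) separating functions does not disturb this — both of which hinge only on the fact that in a convergent summable series just finitely many terms are relevant at any scale, so no uniformity across $j$ is needed.
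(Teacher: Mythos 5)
Your proposal is correct and follows essentially the same route as the paper: both reduce to the countable set $D=\bigcup_{s\in\dom(\ell)}s[\w]^*$, invoke Lemma~\ref{l:T2n} to get normality (hence the Tychonoff property) of $\tau_\ell$, pass to a coarser second-countable topology on $D$ built from countably many point-separating continuous gadgets, and attach $X\setminus D$ as a discrete clopen summand. The only difference is cosmetic: the paper separates points by clopen sets (using zero-dimensionality of countable Tychonoff spaces) and cites the Urysohn metrization theorem, whereas you separate points by $[0,1]$-valued continuous functions and write down the weighted-sum metric explicitly, which amounts to inlining the proof of that theorem.
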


\begin{proof} The implications $(1)\Ra(2)\Ra(3)$ are trivial.

To prove that $(3)\Ra(1)$, assume the the condition (3) is satisfied.
Then the function $\ell$ is $T_2$-separating and by Lemma~\ref{l:T2n}, the largest $\ell$-admissible topology $\tau_\ell$ is Hausdorff and normal. Consider the countable subset $D=\bigcup_{s\in\dom(\ell)}s[\w]^*$ of $X$ and observe that $X\setminus D$ is a closed-and-open discrete subspace of the topological space $X$ endowed with the topology $\tau_\ell$. Being countable and Tychonoff, the closed-and-open subspace $D$ of $X$ is zero-dimensional. Then for any distinct points $x,y\in D$ we can choose a closed-and-open subset $U_{x,y}\subset X$ such that $x\in U_{x,y}$ and $y\notin U_{x,y}$. Let $\tau$ be the topology on $D$, generated by the countable subbase $\{U_{x,y},X\setminus U_{x,y}:x,y\in D,\;x\ne y\}$. It is clear that the topology $\tau$ is second-countable, Hausdorff, zero-dimensional and hence regular. By Urysohn Metrization Theorem~\cite[4.2.9]{Eng}, the topological space $D_\tau=(D,\tau)$ is metrizable. Then the topology of the topological sum on $(X\setminus D)\oplus D_\tau$ is also metrizable. Since $\tau\subset\tau_\ell$, the topology $\tau$ is $\ell$-admissible, which means that each sequence $s\in\dom(\ell)$ converges to the point $\ell(s)$. 
\end{proof}

The countability of the domain $\dom(\ell)$ in Theorems~\ref{t:T} is essential as shown by the following example.

\begin{example} There exists a  $T_2$-separating function $\ell:\dom(\ell)\to \{0,1\}\subset \w$ defined on a subset $\dom(\ell)\subset[\w]^\w$ of cardinality $|\dom(\ell)|=\w_1$ such that the largest $\ell$-admissible topology $\tau_\ell$ is not Hausdorff.
\end{example}

\begin{proof} To construct such function $\ell$, take any Hausdorff $(\w_1,\w_1)$-gap on $\w$, which is a pair $\big((A_i)_{i\in\w_1},(B_i)_{i\in\w_1}\big)$ of families of infinite subsets of $\w$ satisfying the following two conditions:
\begin{itemize}
\item[(H1)] for any $i<j<\w_1$ we have $A_i\subset^*A_j$ and $B_i\subset^* B_j$;
\item[(H2)] $A_i\cap B_j$ is finite for any $i,j\in\w_1$;
\item[(H3)] for any set $C\subset\w$ one of the sets $\{i\in\w_1:A_i\subset^* C\}$ or $\{i\in\w_1:B_i\subset^*\w\setminus C\}$ is at most countable.
\end{itemize}

Here the notation $A\subset^* B$ means that the complement $A\setminus B$ is finite. 

It is well-known \cite[Ch.20]{JW2} that Hausdorff $(\w_1,\w_1)$-gaps do exist in ZFC.

For every $i\in\w_1$ choose any bijective functions $\alpha_i:\w\to A_i$ and $\beta_i:\w\to B_i$, and put $\dom(\ell)=\{\alpha_i,\beta_i:i\in\w_1\}$. Let $\ell:\dom(\ell)\to\{0,1\}\subset\w$ be the function such that $\ell^{-1}(0)=\{\alpha_i\}_{i\in\w_1}$ and $\ell^{-1}(1)=\{\beta_i\}_{i\in\w}$. The injectivity of the functions $\alpha_i,\beta_i$ and the condition (H2) ensure that the function $\ell$ is $T_2$-separating. Assuming that the $\ell$-admissble topology on $\w$ is Hausdorff, we could find two disjoint open sets $U_0,U_1\in\tau_\ell$ such that $0\in U_0$ and $1\in U_1$. By condition (H3), there exists $i\in\w_1$ such that $A_i\not\subset^* U_0$ or $B_i\not\subset^* U_1$. In the first case the set $\{n\in\w:\alpha_i(n)\notin U_0\}$ is infinite, which contradicts $U_0\in\tau_\ell$. In the second case  the set $\{n\in\w:\beta_i(n)\notin U_1\}$ is infinite, which contradicts $U_1\in\tau_\ell$.
\end{proof}

\section{Convergent sequences in topological acts}\label{s:TA}

For a set $X$ denote by $X^X$ the set of all self-maps $X\to X$. The set $X^X$ endowed with the operation of composition is a monoid whose unit is the identity map $\id_X$ of $X$. 

An {\em act} is a pair $(X,\A)$ consisting of a set $X$ and a submonoid $\A\subset X^X$. Elements of the set $\A$ are called the {\em shifts} of the act $(X,\A)$.

%A {\em topological act} is a pair $(X,\A)$ consisting of a topological space $X$ and a submonoid $\A\subset X^X$ consisting of continuous self-maps of $X$.

A topology $\tau$ on the underlying set $X$ of an act $(X,\A)$ is called an {\em shift-continuous} if each shift $\alpha\in \A$ is a continuous self-map of the topological space $(X,\tau)$. 

\begin{theorem}\label{t:TA}  For an act $(X,\A)$ with countable set $\A$ of shifts and  a function $\ell:\dom(\ell)\to X$ defined on a countable subset $\dom(\ell)\subset X^\w$ the following conditions are equivalent:
\begin{enumerate}
\item $X$ admits a shift-continuous metrizable topology $\tau$ in which every sequence $s\in\dom(\ell)$ converges to the point $\ell(s)$;
\item $X$ admits a shift-continuous Hausdorff topology $\tau$ in which every sequence $s\in\dom(\ell)$ converges to the point $\ell(s)$;
\item the following two properties hold:
\begin{itemize}
\item[(3a)] for any $s\in\dom(\ell)$, $\alpha\in \A$ and $x\in X$ with $x\ne \alpha\circ \ell(s)$ the set $\{n\in\w:\alpha\circ s_n=x\}$ is finite;
\item[(3b)] for any sequences $s,t\in\dom(\ell)$ and any shifts $\alpha,\beta\in\A$ with $\alpha{\circ}\ell(s)\ne\beta{\circ}\ell(t)$ there exists a finite set $F\subset \w$ such that $\alpha{\circ}s_n\ne \beta{\circ}t_m$ for any $n,m\in\w\setminus F$.
\end{itemize}
\end{enumerate}
\end{theorem}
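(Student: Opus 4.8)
The plan is to reduce Theorem~\ref{t:TA} to the ``set-level'' Theorem~\ref{t:T} by saturating the family of prescribed convergent sequences under the shifts, and then to upgrade the resulting topology so that it becomes shift-continuous. The implication $(1)\Ra(2)$ is trivial, since every metrizable topology is Hausdorff. For $(2)\Ra(3)$ I would argue sequentially: if $\tau$ is a shift-continuous Hausdorff topology with $s\to\ell(s)$ for all $s\in\dom(\ell)$, then for every $\alpha\in\A$ the continuity of $\alpha$ gives $\alpha\circ s\to\alpha\circ\ell(s)$. Condition $(3a)$ then follows from the $T_1$-property (for $x\ne\alpha\circ\ell(s)$ the set $X\setminus\{x\}$ is an open neighbourhood of the limit, hence contains a cofinite part of $\alpha\circ s$), and $(3b)$ follows by separating $\alpha\circ\ell(s)$ and $\beta\circ\ell(t)$ by disjoint open sets $U,V$ and intersecting with the cofinite tails of $\alpha\circ s$ in $U$ and of $\beta\circ t$ in $V$.

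The heart of the argument is $(3)\Ra(1)$. I would introduce the \emph{shift-saturation} of $\ell$: put $\dom(\bar\ell):=\{\alpha\circ s:\alpha\in\A,\ s\in\dom(\ell)\}\subset X^\w$ and define $\bar\ell(\alpha\circ s):=\alpha\circ\ell(s)$. First this must be checked to be well defined: if $\alpha\circ s=\beta\circ t$ as sequences while $\alpha\circ\ell(s)\ne\beta\circ\ell(t)$, then $(3b)$ yields a finite $F\subset\w$ with $\alpha\circ s_n\ne\beta\circ t_m$ for all $n,m\in\w\setminus F$; taking any $n=m\in\w\setminus F$ contradicts $\alpha\circ s=\beta\circ t$. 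Since $\A$ and $\dom(\ell)$ are countable, so is $\dom(\bar\ell)$; and since $\id_X\in\A$ we have $\dom(\ell)\subset\dom(\bar\ell)$ with $\bar\ell$ extending $\ell$. Reading off the definitions, conditions $(3a)$ and $(3b)$ assert precisely that $\bar\ell$ is $T_1$-separating and $T_2$-separating, i.e.\ that $\bar\ell$ is $T_2$-separating. Hence Lemma~\ref{l:T2n} applies to $\bar\ell$: the largest $\bar\ell$-admissible topology $\tau_{\bar\ell}$ is Hausdorff, normal and a $k_\w$-topology, generated by the compacta $\bar s[\w]^*$, $\bar s\in\dom(\bar\ell)$.

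The key new observation is that $\tau_{\bar\ell}$ is already shift-continuous. Indeed, for $\alpha\in\A$, $U\in\tau_{\bar\ell}$ and $\bar s=\beta\circ t\in\dom(\bar\ell)$ with $\alpha\circ\bar\ell(\bar s)\in U$, the sequence $\alpha\circ\bar s=(\alpha\beta)\circ t$ again lies in $\dom(\bar\ell)$ (because $\A$ is a monoid) and has $\bar\ell$-limit $\alpha\circ\bar\ell(\bar s)\in U$; by the definition of $\tau_{\bar\ell}$ almost all of its terms lie in $U$, which says exactly that $\{n:\bar s_n\notin\alpha^{-1}(U)\}$ is finite. Thus $\alpha^{-1}(U)\in\tau_{\bar\ell}$. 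The very same computation, applied to points of $\bar s[\w]^*$, shows $\alpha(D)\subset D$ for the countable subspace $D:=\bigcup_{\bar s\in\dom(\bar\ell)}\bar s[\w]^*$, which (as in the proof of Theorem~\ref{t:T}) is closed-and-open in $(X,\tau_{\bar\ell})$ with discrete complement.

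It remains to pass from $\tau_{\bar\ell}$ to a \emph{metrizable} shift-continuous topology without destroying shift-continuity, and this is where I expect the only real difficulty to lie. The metrization step of Theorem~\ref{t:T} replaces $\tau_{\bar\ell}$ on $D$ by the topology generated by an arbitrary countable clopen subbase, which need not respect the shifts. To repair this I would use that $D$ is countable and Tychonoff, hence zero-dimensional: first choose a countable point-separating family $\mathcal S_0$ of clopen subsets of $D$, and then close it under complementation and under the countably many operations $S\mapsto\alpha^{-1}(S)\cap D$, $\alpha\in\A$. Since each $\alpha$ is $\tau_{\bar\ell}$-continuous and $D$ is clopen, every set so produced is again clopen in $D$, and the resulting family $\mathcal S$ stays countable. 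Let $\tau$ be the topology on $D$ generated by $\mathcal S$: being generated by a countable clopen subbase that separates points, it is second-countable, Hausdorff and zero-dimensional, hence regular, hence metrizable by the Urysohn Metrization Theorem; and since $\mathcal S\subset\tau_{\bar\ell}$, every $s\in\dom(\ell)$ still converges to $\ell(s)$. Shift-continuity is now built in: for $\alpha\in\A$ and a subbasic $S\in\mathcal S$ one has $(\alpha|_D)^{-1}(S)=\alpha^{-1}(S)\cap D\in\mathcal S$ (the equality because $\alpha(D)\subset D$, the membership by closure of $\mathcal S$), so each $\alpha|_D$ is continuous. Finally I would equip $X$ with the topology of the topological sum $(X\setminus D)\oplus(D,\tau)$ with $X\setminus D$ discrete: it is metrizable, its shifts are continuous (because $\alpha(D)\subset D$ and continuity off $D$ is automatic on a discrete clopen set), and each $s\in\dom(\ell)$, whose range and limit lie in $D$, still converges to $\ell(s)$. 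This establishes $(1)$ and closes the cycle.
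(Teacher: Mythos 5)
Your proposal is correct and follows essentially the same route as the paper's proof: saturating $\dom(\ell)$ under the shifts to get a $T_2$-separating function, invoking Lemma~\ref{l:T2n}, observing that the largest admissible topology is already shift-continuous, and then metrizing the countable clopen part $D$ via a countable clopen subbase closed under shift preimages and the Urysohn theorem. The only cosmetic difference is that you close the subbase under iterated preimages, whereas the paper applies $\alpha^{-1}$ once and relies on $\A$ being a monoid; both work.
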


\begin{proof} The implications $(1)\Ra(2)\Ra(3)$ are trivial.
To prove that $(3)\Ra(1)$, assume that the condition (3) is satisfied. Consider the set $\dom(\A\ell)=\{\alpha\circ s:\alpha\in \A,\;s\in \dom(\ell)\}\subset X^\w$ and the function $\A\ell:\dom(\A\ell)\to X$ defined by the formula $\A\ell(\alpha\circ s)=\alpha(\ell(s))$. The conditions (3a) and (3b) ensure that the function $\A\ell$ is well-defined and is $T_2$-separating. By Lemma~\ref{l:T2n}, the largest $\A\ell$-admissible topology $\tau_{\A\ell}$ on $X$ is Hausdorff and normal.  Since the monoid $\A$ contains the identity map of $X$, the topology $\tau_{\A\ell}$ is $\ell$-admissible, which implies that each sequence $s\in\dom(\ell)$ converges to $\ell(s)$ in the topological space $(X,\tau_{\A\ell})$.

We claim that the topology $\tau_{\A\ell}$ is shift-continuous.  Given any shift $\alpha\in\A$ and open set $U\in\tau_{\A\ell}$, we need to check that $\alpha^{-1}(U)\in\tau_{\A\ell}$. The latter inclusion holds if any only if for any sequence $s\in\dom(\A\ell)$ with $\A\ell(s)\in \alpha^{-1}(U)$ the set $\{n\in\w:s_n\notin \alpha^{-1}(U)\}=\{n\in\w:\alpha\circ s_n\notin U\}$ is finite. Since $\A$ is a monoid, the sequence $\alpha\circ s$ belongs to $\dom(\A\ell)$. Since $U\in\tau_{\A\ell}$ and $\A\ell(\alpha\circ s)=\alpha(\A\ell(s))\in U$, the set   $\{n\in\w:\alpha\circ s_n\notin U\}=\{n\in\w:s_n\notin \alpha^{-1}(U)\}$ is finite and we are done.
\smallskip

 The countability of the sets $\dom(\ell)$ and $\A$ imply the countability of the sets $\dom(\A\ell)$ and $D=\{s[\w]^*:s\in \dom(\A\ell)\}$.
Observe that for every $\alpha\in \A$ we have $\alpha[D]\subset D$.

By the definition of the topology $\tau_{\A\ell}$, the set $D$ is open-and-closed in $(X,\tau_{\A\ell})$ and the complement $X\setminus D$ is discrete. Being Tychonoff, the countable subspace $D$ of $(X,\tau_{\A\ell})$ is zero-dimensional. This allows us to choose a countable family $\mathcal B\subset\tau_{\A\ell}$ of open-and-closed sets that separate points of the countable set $D$ in the sense that for any distinct points $x,y\in D$ there exists a set $B\in\mathcal B$ such that $x\in B$ and $y\notin B$. Since $\tau_{\A\ell}$ is an act topology on $(X,\A)$, for every $\alpha\in \A$ and $B\in\mathcal B$ the set $\alpha^{-1}(B)$ is closed-and-open. Then the topology $\tau_D$ on $D$ generated by the subbase $\{D\cap \alpha^{-1}(B), D\setminus \alpha^{-1}(B):\alpha\in\A,\;B\in\mathcal B\}$ is second-countable, Hausdorff and zero-dimensional. By the Urysohn metrization Theorem \cite[4.2.9]{Eng}, the topological space $D_\tau=(D,\tau_D)$ is metrizable. Then the topology $\tau$ of topological sum $D_\tau\oplus(X\setminus D)$ of $D_\tau$ and the discrete topological space $X\setminus D$ is metrizable. By the definition of the topology $\tau_D$, for every $\alpha\in\A$ the restriction $\alpha{\restriction}D$ is a continuous self-map of the topological space $D_\tau$. Since $X\setminus D$ is a closed-and-open discrete subspace of $(X,\tau)$, the continuity of $\alpha{\restriction}D$ implies that $\alpha$ is a continuous self-map of the metrizable topological space $(X,\tau)$. This means that the topology $\tau$ is shift-continuous. Since $\tau\subset\tau_{\A\ell}$, the metrizable topology $\tau$ is $\ell$-admissible.
\end{proof}

Theorem~\ref{t:TA} can be compared with the following result proved in \cite[3.4]{BPS}.

\begin{theorem}[Banakh, Protasov, Sipacheva] Let $\kappa$ be an infinite cardinal, $(X,\A)$ be an act, $x\in X$, and $(x_i)_{i\in \kappa}$ be a transfinite sequence of points in $X$. Assume that there exists a (not necessarily bijective) enumeration $\A=\{\alpha_i\}_{i\in\kappa}$ of the set $\A$ such that for each ordinal $m\in\kappa$ and ordinals $i,j,k<m$ the following conditions are satisfied:
\begin{enumerate}
\item if $\alpha_i(x)\ne \alpha_j(x)$, then $\alpha_i(x_m)\ne \alpha_j(x_m)$;
\item if $\alpha_i(x)\ne \alpha_j(x_k)$, then $\alpha_i(x_m)\ne \alpha_j(x_k)$.
\end{enumerate}
Then $X$ admits a shift-continuous hereditarily normal topology $\tau$ in which the transfinite sequence $(x_i)_{i\in\lambda}$ converges to the point $x$ in the sense that
for every neighborhood $O_x\in\tau$ of $x$ there exists $n\in\kappa$ such that $x_i\in O_x$ for all $i\ge n$ in $\kappa$.
\end{theorem}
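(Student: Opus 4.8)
The plan is to mimic the construction of the largest admissible topology from Sections~\ref{s:T} and~\ref{s:TA}, now in the transfinite setting. First I would let $\tau$ be the largest topology on $X$ in which every shift is continuous and the sequence $(x_i)_{i\in\kappa}$ converges to $x$; concretely, I declare a set $U\subseteq X$ to be open if and only if for every shift $\alpha\in\A$ with $\alpha(x)\in U$ there is an ordinal $n<\kappa$ such that $\alpha(x_i)\in U$ for all $i\ge n$. A direct check shows this is a topology (binary intersections use only that the maximum of two ordinals below $\kappa$ is below $\kappa$). Because $\A$ is a monoid, for $\alpha,\beta\in\A$ the composite $\alpha\circ\beta$ again lies in $\A$, and applying the openness condition to $\alpha\circ\beta$ yields that $\alpha^{-1}(U)$ is open whenever $U$ is; hence every shift is continuous. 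Since $\id_X\in\A$, applying the definition to $\alpha=\id_X$ shows that $(x_i)_{i\in\kappa}$ converges to $x$ in exactly the sense required by the statement. Thus the only real content is to verify the separation axioms.

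Next I would use condition (2) to prove that $\tau$ is a $T_1$-topology, i.e. that each singleton $\{p\}$ is closed. Fix $p$ and a shift $\alpha_{i_0}$ with $\alpha_{i_0}(x)\ne p$. If $p=\alpha_{j_0}(x_{k_0})$ for some indices, condition~(2) (with $i=i_0$, $j=j_0$, $k=k_0$) gives $\alpha_{i_0}(x_m)\ne p$ for every $m>\max\{i_0,j_0,k_0\}$, so $\{m:\alpha_{i_0}(x_m)=p\}$ is bounded in $\kappa$; and if $p$ admits no such representation, this set is empty, since $\alpha_{i_0}(x_m)=p$ would itself be such a representation. In either case $\{m:\alpha_{i_0}(x_m)=p\}$ is bounded, which is precisely the condition for $X\setminus\{p\}$ to be open. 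Hence $\tau$ is $T_1$.

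The core of the argument, and the place where both conditions (1) and (2) are genuinely needed, is the passage to normality. Since $\kappa$ may be uncountable, the $k_\w$-machinery of Section~\ref{s:T} is unavailable, so I would instead argue by a transfinite induction on $m<\kappa$, in the spirit of Lemma~\ref{l:T2n} but indexed by $\kappa$. The key observation is that the only non-isolated points of $(X,\tau)$ lie in the orbit $\{\alpha(x):\alpha\in\A\}$ of the limit, that condition~(1) controls the mutual separation of two orbit points $\alpha_i(x)\ne\alpha_j(x)$ (it forces $\alpha_i(x_m)\ne\alpha_j(x_m)$ for all large $m$), while condition~(2) controls the separation of an orbit point $\alpha_i(x)$ from a fixed sequence point $\alpha_j(x_k)$. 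Writing $K_m=\{\alpha_i(x):i<m\}\cup\{\alpha_i(x_j):i,j<m\}$ for the stage-$m$ piece, the index restriction $i,j,k<m$ in (1) and (2) is exactly what guarantees that each $K_m$ is closed and that a separating pair of open sets chosen on $K_m$ stays consistent when the point $x_m$ and the shift $\alpha_m$ are adjoined at stage $m$. Carrying the inductively built open sets through limit stages and verifying their coherence is the main obstacle, and it is resolved precisely by (1) and (2).

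Finally, to upgrade normality to hereditary normality I would either observe that the same neighbourhood construction relativizes verbatim to every subspace, or---more cleanly---exhibit a monotone normality operator: assign $H(p,U)=\{p\}$ at each isolated point $p$, and at each non-isolated orbit point the canonical tail neighbourhood determined by the well-order on $\kappa$ inside $U$; conditions (1) and (2) are then what is required to verify the defining inequality of monotone normality, which yields hereditary normality. I expect the recurring difficulty to be bookkeeping the index couplings---each shift $\alpha_i$ is tied to the same ordinal $i$ that bounds the applicability of the conditions---and ensuring that the transfinite recursion closes up at limit ordinals.
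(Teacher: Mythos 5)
First, a point of context: the paper does not prove this theorem at all --- it is quoted verbatim from \cite[3.4]{BPS} purely for comparison with Theorem~\ref{t:TA}, so there is no in-paper proof to measure your attempt against. Judged on its own terms, your proposal correctly handles the easy parts: the maximal shift-continuous admissible topology is well defined, shift-continuity follows from $\A$ being a monoid, convergence from $\id_X\in\A$, and your $T_1$ argument from condition (2) is sound. But the entire content of the theorem is the (hereditary) normality, and there your proposal has a genuine gap: the sentence ``Carrying the inductively built open sets through limit stages and verifying their coherence is the main obstacle, and it is resolved precisely by (1) and (2)'' asserts exactly the thing that needs to be proved. The countable argument of Lemma~\ref{l:T2n} rests on two facts that fail for uncountable $\kappa$: (i) each building block $s[\w]^*$ is \emph{compact} Hausdorff, hence normal, which is what allows $A_n$ and $B_n$ to be separated inside $K_n$ at each step --- but $\{\alpha(x)\}\cup\{\alpha(x_i):i\in\kappa\}$ is not compact when $\kappa>\w$, since a neighborhood of $\alpha(x)$ need only contain a tail and the remaining initial segment can be infinite and discrete; and (ii) the topology is a $k_\w$-topology generated by the $K_n$, so that $\bigcup_n V_n$ is automatically open --- whereas for uncountable $\kappa$ a set whose trace on every $K_m$ is relatively open need not be open, because the tail bounds $n_m$ extracted at each stage $m$ can be cofinal in $\kappa$, leaving no single tail of $(\alpha(x_i))_{i\in\kappa}$ inside the union. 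So the induction neither starts (no normality of $K_m$) nor closes up (no openness of the limit), and nothing in your sketch addresses either failure.

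The hereditary normality upgrade is likewise only gestured at. That a ``neighbourhood construction relativizes verbatim to every subspace'' is not an argument (normality is famously not hereditary), and the proposed monotone normality operator is not well defined: a non-isolated point $p=\alpha(x)$ may satisfy $p=\beta(x)$ for many shifts $\beta$ (the enumeration is not bijective and distinct shifts may agree at $x$), so there is no single ``canonical tail neighbourhood'' of $p$ inside a given open $U$, and the defining implication of monotone normality is never verified. A correct proof (as in \cite{BPS}) proceeds by building a specific, much smaller topology from an explicitly described neighborhood base attached to the enumeration $\{\alpha_i\}_{i\in\kappa}$, rather than by taking the maximal admissible topology and hoping it is hereditarily normal; whether the maximal topology even satisfies the conclusion under hypotheses (1)--(2) is not established by your sketch.
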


\section{Convergent sequences in semigroups}\label{s:SG}

Let $X$ be a semigroup and $X^1$ be the semigroup $X$ with attached unit. A topology $\tau$ on $X$ is called {\em shift-continuous} if for every $a,b\in X^1$ the two-sided shift $$X\to X,\;\;x\mapsto axb,$$is a continuous self-map of the topological space $(X,\tau)$. 

%By a {\em semitopological semigroup} we understand a semigroup endowed with a shift-continuous topology.

Each semigroup $X$ has the structure of an act $(X,\A)$ endowed with the family of shifts $\A=\{s_{a,b}:a,b\in X^1\}$. Applying Theorem~\ref{t:TA} to this act, we obtain the following theorem, which is a main result of this section.

\begin{theorem}\label{t:TS}  For a countable semigroup $X$ and  a function $\ell:\dom(\ell)\to X$ defined on a countable subset $\dom(\ell)\subset X^\w$, the following conditions are equivalent:
\begin{enumerate}
\item The semigroup $X$ admits a shift-continuous metrizable topology $\tau$ in which every sequence $s\in\dom(\ell)$ converges to the point $\ell(s)$;
\item The semigroup $X$ admits a shift-continuous Hausdorff topology $\tau$ in which every sequence $s\in\dom(\ell)$ converges to the point $\ell(s)$;
\item the following two properties hold:
\begin{itemize}
\item[(3a)] for any $s\in\dom(\ell)$, $a,b\in X^1$ and $x\in X$ with $x\ne a{\cdot}\ell(s){\cdot}b$ the set $\{n\in\w:a{\cdot}s_n{\cdot}b=x\}$ is finite;
\item[(3b)] for any $s,t\in\dom(\ell)$ and $a,b,c,d\in X^1$ with $a{\cdot}\ell(s){\cdot}b\ne c{\cdot}\ell(t){\cdot}d$ there exists a finite set $F\subset\w$ such that $a{\cdot}s_n{\cdot}b\ne c{\cdot}t_m{\cdot}d$ for all $n,m\in\w\setminus F$.
\end{itemize}
\end{enumerate}
\end{theorem}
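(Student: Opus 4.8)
The plan is to realize the countable semigroup $X$ as an act and then invoke Theorem~\ref{t:TA}. First I would equip $X$ with the act structure $(X,\A)$, where $\A=\{s_{a,b}:a,b\in X^1\}$ and $s_{a,b}\colon X\to X$ is the two-sided shift $s_{a,b}(x)=axb$. The first point to verify is that $\A$ is a countable submonoid of $X^X$. Countability is immediate: since $X$ is countable, so is the monoid $X^1$ obtained by attaching a unit, and hence $\A$ is the image of the countable set $X^1\times X^1$ under the map $(a,b)\mapsto s_{a,b}$. To see that $\A$ is a submonoid I would note that $s_{1,1}=\id_X$, and compute the composition: for any $a,b,c,d\in X^1$ and $x\in X$ we have $(s_{a,b}\circ s_{c,d})(x)=a(cxd)b=(ac)x(db)$, so that $s_{a,b}\circ s_{c,d}=s_{ac,db}\in\A$. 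Thus $\A$ contains the identity and is closed under composition.

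Next I would reconcile the two notions of continuity. A self-map $\alpha\in\A$ is a shift of the act precisely when it is a two-sided shift $x\mapsto axb$ of the semigroup, so a topology $\tau$ on $X$ is shift-continuous in the sense of the act $(X,\A)$ if and only if it is shift-continuous in the sense of the semigroup $X$. Consequently, conditions (1) and (2) of Theorem~\ref{t:TS} are literally conditions (1) and (2) of Theorem~\ref{t:TA} for this particular act.

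It then remains to match the combinatorial conditions. Substituting $\alpha=s_{a,b}$ gives $\alpha\circ\ell(s)=a\cdot\ell(s)\cdot b$ and $\alpha\circ s_n=a\cdot s_n\cdot b$, so condition (3a) of Theorem~\ref{t:TA} becomes condition (3a) of Theorem~\ref{t:TS} verbatim. Likewise, substituting $\alpha=s_{a,b}$ and $\beta=s_{c,d}$ turns condition (3b) of Theorem~\ref{t:TA} into condition (3b) of Theorem~\ref{t:TS}. With this dictionary in place, the equivalence $(1)\Leftrightarrow(2)\Leftrightarrow(3)$ asserted in Theorem~\ref{t:TS} is exactly the corresponding equivalence of Theorem~\ref{t:TA} applied to the act $(X,\A)$, which completes the argument.

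I do not anticipate a genuine obstacle: the whole proof is a translation through the act formalism, and the only step demanding a moment's care is checking that the family of two-sided shifts is closed under composition, i.e. the identity $s_{a,b}\circ s_{c,d}=s_{ac,db}$. This is what guarantees that $\A$ is a monoid and hence that Theorem~\ref{t:TA} is applicable; everything else is bookkeeping.
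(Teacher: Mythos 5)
Your proposal is correct and follows exactly the route the paper takes: the paper states Theorem~\ref{t:TS} as an immediate application of Theorem~\ref{t:TA} to the act $(X,\A)$ with $\A=\{s_{a,b}:a,b\in X^1\}$, and you have merely spelled out the details (countability of $\A$, the monoid identity $s_{a,b}\circ s_{c,d}=s_{ac,db}$, and the translation of conditions (3a),(3b)) that the paper leaves implicit.
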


For commutative semigroups, Theorem~\ref{t:TS} has a bit simpler form.

\begin{theorem}\label{t:TAS}  For a countable commutative semigroup $X$ and  a function $\ell:\dom(\ell)\to X$ defined on a countable subset $\dom(\ell)\subset X^\w$, the following conditions are equivalent:
\begin{enumerate}
\item The semigroup $X$ admits a shift-continuous metrizable topology $\tau$ in which every sequence $s\in\dom(\ell)$ converges to the point $\ell(s)$.
\item The semigroup $X$ admits a shift-continuous Hausdorff topology $\tau$ in which every sequence $s\in\dom(\ell)$ converges to the point $\ell(s)$.
\item The following two properties hold:
\begin{itemize}
\item[(3a)] for any $s\in\dom(\ell)$, $a\in X^1$ and $x\in X$ with $x\ne a{\cdot}\ell(s)$ the set $\{n\in\w:a{\cdot}s_n=x\}$ is finite;
\item[(3b)] for any $s,t\in\dom(\ell)$ and $a,b\in X^1$ with $a{\cdot}\ell(s)\ne b{\cdot}\ell(t)$ there exists a finite subset $F\subset\w$ such that $a{\cdot}s_n\ne b{\cdot}t_m$ for any $n,m\in\w\setminus F$.
\end{itemize}
\end{enumerate}
\end{theorem}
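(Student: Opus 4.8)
The plan is to deduce Theorem~\ref{t:TAS} directly from Theorem~\ref{t:TS} by observing that, for a commutative semigroup, the two-sided shifts degenerate into one-sided shifts. First I would note that ``shift-continuous topology on a semigroup'' is a single notion, defined in Section~\ref{s:SG} via the two-sided shifts $x\mapsto a x b$; hence conditions (1) and (2) of Theorem~\ref{t:TAS} are literally the same statements as conditions (1) and (2) of Theorem~\ref{t:TS}. Since Theorem~\ref{t:TS} already establishes $(1)\Leftrightarrow(2)\Leftrightarrow(3)$ with its two-sided version of (3), it remains only to verify that condition (3) of Theorem~\ref{t:TS} is equivalent to condition (3) of Theorem~\ref{t:TAS}.

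The crux is the elementary identity furnished by commutativity: for any $a,b\in X^1$ and $x\in X$ one has $a{\cdot}x{\cdot}b=(a{\cdot}b){\cdot}x$, where the product $a{\cdot}b$ again lies in the monoid $X^1$. I would use this to rewrite every two-sided expression $a{\cdot}\ell(s){\cdot}b$ and $a{\cdot}s_n{\cdot}b$ appearing in (3a)/(3b) of Theorem~\ref{t:TS} as the one-sided expressions $(ab){\cdot}\ell(s)$ and $(ab){\cdot}s_n$. For the implication ``(3) of Theorem~\ref{t:TS}'' $\Ra$ ``(3) of Theorem~\ref{t:TAS}'' I would simply substitute $b=1$, the attached unit, so that $a{\cdot}\ell(s){\cdot}1=a{\cdot}\ell(s)$ and the two-sided conditions specialize immediately to the one-sided ones. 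For the reverse implication I would, given arbitrary $a,b\in X^1$, apply the one-sided condition with the single parameter $c:=a{\cdot}b\in X^1$ and then translate back through the identity above; the analogous substitutions $c\mapsto a{\cdot}b$ and $d\mapsto c{\cdot}d$ handle the two pairs of parameters occurring in (3b).

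The argument is thus a straightforward quantifier-matching between the parameter families $\{(a,b):a,b\in X^1\}$ and $\{c:c\in X^1\}$, the map $(a,b)\mapsto ab$ being surjective onto $X^1$ (it hits any $c$ via $(c,1)$) and compatible with the shift action by commutativity. I do not expect any genuine obstacle: the only points requiring a little care are, first, checking that $ab\in X^1$ and that $c{\cdot}x\in X$ for $x\in X$, so that the one-sided shifts are well-defined self-maps of $X$ (this holds because $X^1$ is a monoid and $X$ is an ideal of $X^1$); and second, keeping the roles of the two sequences $s,t$ and their four parameters straight when reducing (3b). Once the equivalence of the two versions of (3) is in place, Theorem~\ref{t:TAS} follows at once from Theorem~\ref{t:TS}.
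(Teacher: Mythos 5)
Your reduction is correct and is exactly what the paper intends: it states Theorem~\ref{t:TAS} without proof as a ``simpler form'' of Theorem~\ref{t:TS}, relying precisely on the observation that commutativity turns every two-sided shift $x\mapsto axb$ into the one-sided shift $x\mapsto (ab)x$ with $ab\in X^1$, so the two versions of condition (3) match up via the substitutions $(a,b)\mapsto ab$ and $c\mapsto(c,1)$. Your spelled-out quantifier bookkeeping, including the check that $ab\in X^1$ and that the map $(a,b)\mapsto ab$ is onto $X^1$, fills in the details the paper leaves implicit; nothing further is needed.
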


\section{Convergent sequences in semilattices}\label{s:SL}

Applying Theorem~\ref{t:TAS} to semilattices we obtain the following characterization.

\begin{theorem}\label{t:TL}  For a countable semilattice $X$ and  a function $\ell:\dom(\ell)\to X$ defined on a countable subset $\dom(\ell)\subset X^\w$ the following conditions are equivalent:
\begin{enumerate}
\item The semilattice $X$ admits a shift-continuous metrizable topology $\tau$ in which every sequence $s\in\dom(\ell)$ converges to the point $\ell(s)$.
\item The semilattice $X$ admits a shift-continuous Hausdorff topology $\tau$ in which every sequence $s\in\dom(\ell)$ converges to the point $\ell(s)$.
\item The following two conditions hold:
\begin{itemize}
\item[(3a)] for any $s\in\dom(\ell)$, $a\in X$ and $x\in X$ with $x\ne a{\cdot}\ell(s)$ the set $\{n\in\w:a{\cdot}s_n=x\}$ is finite;
\item[(3b)] for any $s,t\in\dom(\ell)$ and $a,b\in X$ with $a{\cdot}\ell(s)\ne b{\cdot}\ell(t)$ there exists a finite set $F\subset\w$ such that $a{\cdot}s_n\ne b{\cdot}t_m$ for any $n,m\in\w\setminus F$.
\end{itemize}
\end{enumerate}
\end{theorem}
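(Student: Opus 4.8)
The plan is to derive Theorem~\ref{t:TL} as a direct corollary of Theorem~\ref{t:TAS}, using only the fact that a semilattice is a commutative semigroup. Under this identification the notion of shift-continuity for the semilattice coincides with the one for the ambient commutative semigroup (continuity of the maps $x\mapsto a\cdot x$), so statements $(1)$ and $(2)$ of the present theorem are literally statements $(1)$ and $(2)$ of Theorem~\ref{t:TAS}. Hence $(1)\Leftrightarrow(2)$ and their equivalence to condition $(3)$ of Theorem~\ref{t:TAS} come for free, and the only thing left to verify is that condition $(3)$ of Theorem~\ref{t:TL}, where the multipliers range over $X$, is equivalent to condition $(3)$ of Theorem~\ref{t:TAS}, where they range over $X^1=X\cup\{1\}$.

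One implication is immediate: since $X\subset X^1$, every instance of (3a),(3b) for the semilattice is a special case of the corresponding instance for the commutative semigroup, so condition $(3)$ of Theorem~\ref{t:TAS} trivially implies condition $(3)$ of Theorem~\ref{t:TL}. All the content sits in the reverse implication, and its only nontrivial feature is the elimination of the adjoined unit $1$, which acts as the identity map. The main obstacle is therefore precisely this: to reproduce each instance of (3a) or (3b) in which one or both multipliers equal $1$ from instances in which all multipliers already lie in $X$.

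The \emph{key tool} I would isolate is an idempotence observation: in a semilattice, for any two distinct elements $u\ne v$ at least one of them, call it $c$, satisfies $c\cdot u\ne c\cdot v$. Indeed, if $u\cdot v\ne u$ one takes $c=u$ (so $c\cdot u=u\cdot u=u\ne u\cdot v=c\cdot v$); otherwise $u\cdot v=u$ and one takes $c=v$ (so $c\cdot u=v\cdot u=u\cdot v=u\ne v=v\cdot v=c\cdot v$, using $u\ne v$). The point is that multiplication by such a $c$ preserves every equality at the level of sequence terms — if a term equals some product, so does its $c$-multiple — while by idempotence it keeps the two relevant limit values distinct, so that a genuine $X$-multiplier can be threaded through a given unital instance.

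Using this, I would dispatch the unital cases one by one. For the unit case of (3a) (requiring $\{n\in\w:s_n=x\}$ finite whenever $x\ne\ell(s)$), I would pick $c\in\{x,\ell(s)\}$ with $c\cdot x\ne c\cdot\ell(s)$; then $\{n\in\w:s_n=x\}\subset\{n\in\w:c\cdot s_n=c\cdot x\}$, and the latter is finite by the honest (3a) applied with multiplier $c$ and value $c\cdot x$. For the doubly-unital case of (3b) (multipliers $1,1$, i.e.\ $\ell(s)\ne\ell(t)$) I would choose $c\in\{\ell(s),\ell(t)\}$ with $c\cdot\ell(s)\ne c\cdot\ell(t)$ and apply (3b) with both multipliers equal to $c$; since $s_n=t_m$ forces $c\cdot s_n=c\cdot t_m$, the finite set $F$ produced witnesses $s_n\ne t_m$ off $F$. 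The genuinely mixed case (one multiplier $1$, the other $b\in X$, so $\ell(s)\ne b\cdot\ell(t)$) is the same: writing $u=b\cdot\ell(t)$, pick $c\in\{\ell(s),u\}$ with $c\cdot\ell(s)\ne c\cdot u$ and apply (3b) with multipliers $c$ and $c\cdot b$ (both in $X$), noting $(c\cdot b)\cdot\ell(t)=c\cdot u$ and that $s_n=b\cdot t_m$ forces $c\cdot s_n=(c\cdot b)\cdot t_m$; the remaining mixed case is symmetric. Once the idempotence observation is recorded, each of these verifications is routine, so the theorem follows as a short corollary of Theorem~\ref{t:TAS}.
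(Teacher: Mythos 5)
Your proposal is correct and follows essentially the same route as the paper: both reduce Theorem~\ref{t:TL} to Theorem~\ref{t:TAS} by showing that condition (3) survives the adjunction of the external unit, and both do so by threading a multiplier $c\in\{u,v\}$ with $c\cdot u\ne c\cdot v$ through each unital instance (the paper's choice $c=\ell(s)$ or $c=b\cdot\ell(t)$ in its (3b) argument is exactly an instance of your idempotence observation). Your version is slightly cleaner in that it isolates this observation as a single lemma and handles the unit case of (3a) by a direct inclusion rather than the paper's argument by contradiction, but the underlying idea is identical.
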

 
\begin{proof} The implications $(1)\Ra(2)\Ra(3)$ are trivial. The implication $(3)\Ra(1)$ will follow from Theorem~\ref{t:TAS} as soon as we check that the condition $(3)$ of Theorem~\ref{t:TL} implies condition (3) of Theorem~\ref{t:TAS}. So, assume that condition (3) of Theorem~\ref{t:TL} is satisfied.

To check the condition (3a) of Theorem~\ref{t:TAS}, take any sequence $s\in \dom(\ell)$ and element $a\in X^1$ and $x\in X$ with $x\ne a{\cdot}\ell(s)$. If $a\in X$, then the set $\{n\in\w:a{\cdot}s_n=x\}$ is finite by the condition (3a) of Theorem~\ref{t:TL}. So, we assume that $a$ is an external unit for $X$. In this case $\{n\in\w:a{\cdot}s_n=x\}=\{n\in\w:s_n=x\}\subset\{n\in\w:x{\cdot}s_n=x{\cdot}x=x\}$. Assuming that the set $\{n\in\w:s_n=x\}$ is infinite, we conclude that the set $\{n\in\w:x{\cdot}s_n=x\}$ is infinite, which implies that $x{\cdot}\ell(s)=x$. Since $\ell(s){\cdot}\ell(s)=\ell(s)=a{\cdot}\ell(s)\ne x=\ell(s){\cdot}x$, the set
$$\{n\in\w:\ell(s){\cdot}s_n=x\}=\{n\in\w:\ell(s){\cdot}s_n=\ell(s){\cdot}x\}\supset \{n\in\w:s_n=x\}$$is finite, which contradicts our assumption.

Next, we check the condition (3b) of Theorem~\ref{t:TAS}. Given any sequences $s,t\in\dom(\ell)$ and elements $a,b\in X^1$ with $a{\cdot}\ell(s)\ne b{\cdot}\ell(t)$, we need to find a finite set $F\subset\w$ such that $a{\cdot}s_n\ne b{\cdot}t_m$ for any $n,m\in\w\setminus F$. 
The condition (3a) of Theorem~\ref{t:TL} ensures that $a$ or $b$ does not belong to $X$. We lose no generality assuming that $a\notin X$ and hence $a$ is the external unit to $X$. In this case the inequality  $a{\cdot}\ell(s)\ne b{\cdot}\ell(t)$ transforms into the inequality $\ell(s)\ne b{\cdot}\ell(t)$. We claim that there exists an element $c\in X$ such that $c{\cdot}\ell(s)\ne cb{\cdot}\ell(t)$. If $\ell(s)\ne \ell(s){\cdot}b{\cdot}\ell(t)$, then put $c=\ell(s)$. If $\ell(s)=\ell(s){\cdot}b{\cdot}\ell(t)$, then put $c=b{\cdot}\ell(t)$ and conclude that $c{\cdot}\ell(s)=\ell(s)\ne b{\cdot}\ell(t)=cb{\cdot}\ell(t)$. 

In both cases we get $c{\cdot}\ell(s)\ne cb{\cdot}\ell(t)$. By condiction (3b) of Theorem~\ref{t:TL}, there exists a finite set $F\subset \w$ such that $c{\cdot}s_n\ne cb{\cdot}t_m$ and hence $s_n\ne b{\cdot}t_m$ for any $n,m\in\w\setminus F$.
\end{proof}

A topology on a semilattice $X$ is called {\em Lawson} if it has a base consisting of open subsemilattices.

\begin{example} There exists a countable semilattice $X$ and a function $\ell:\{s,t\}\to X$ defined on a subset $\{s,t\}\subset X^\w$ such that
\begin{enumerate}
\item The semilattice $X$ admits a shift-continuous metrizable topology $\tau$ in which the sequence $s$ converges to $\ell(s)$ and the sequence $t$ converges to $\ell(t)$.
\item The semilattice $X$ admits no Lawson Hausdorff topology $\tau$ in which the sequence $s$ converges to $\ell(s)$ and the sequence $t$ converges to $\ell(t)$.
\end{enumerate}
\end{example}

\begin{proof} By \cite[2.21]{CHK}, there exists a compact metrizable topological semilattice $K$ containing two points $x,y\in K$ such that for any neighborhoods $O_x,O_y\subset K$ of the points $x,y$ there exists a finite subset $F\subset O_x$ such that $\inf F\in O_y$. Fix countable neighborhood bases $\{V_n\}_{n\in\w}$ and $\{W_n\}_{n\in\w}$ at the points $x,y$, respectively. For every $n\in\w$ choose a finite subset $F_n\subset \bigcap_{k\le n}V_k$ such that $\inf F_n\in \bigcap_{k\le n}W_k$. Let $s\in X^\w$ be a sequence such that $s_n=\inf F_n$ for every $n\in\w$ and $t\in X^\w$ be a sequence such that $F_n=\{t_k:\sum_{i<n}|F_i|<k\le\sum_{i\le n}|F_i|\}$ for every $n\in\w$. Let $\ell(s)=y$ and $\ell(t)=x$. 

Let $X$ be the countable semilattice generated by the countable set $\{x,y\}\cup s[\w]\cup t[\w]$. The metrizable topology on $X$ inherited from $K$ witnesses that the condition (1) is satisfied.

It remains to prove that $X$ admits no Lawson  Hausdorff topology $\tau$ in which the sequence $s$ converges to $\ell(s)=y$ and the sequence $t$ converges to $\ell(t)=x$. To derive a contradiction, assume that such topology $\tau$ exists. Then the points $x,y$ have disjoint open neighborhoods $O_x,O_y\in\tau$ such that $O_x$ is a subsemilattice of $X$. Since the sequences $s$ and $t$ converge to $y$ and $x$, respectively, there exists $n\in\w$ such that $s_k\in O_y$ and $t_k\in O_x$ for all $k\ge n$. Then $F_n\subset\{t_k\}_{k\ge n}\subset O_x$ and hence $\inf F_n\in O_x$ (as $O_x$ is a subsemilattice of $X$). On the other hand, $\inf F_n=s_n\in O_y$.  Then   $\inf F_n\in O_x\cap O_y$, which contradicts the choice of the neighborhoods $O_x$ and $O_y$.
\end{proof}

\section{The example}\label{s:Ex}

In this section we shall apply Theorem~\ref{t:TL} to construct an example of a metrizable semitopological semilattice with dense non-closed partial order.

Consider the semilattice $\{0,1,2\}$ endowed with the operation of taking minimum. In the semilattice $\{0,1,2\}^\w$ consider the countable subsemilattice $X$ consisting of functions $f:\w\to\{0,1,2\}$ having non-empty finite support $\supp(f):=f^{-1}(\{0,1\})$. 

It is easy to see that the partial order on $\{0,1,2\}$ induced by the semilattice operation (of minimum) coincides with the usual linear order on $\{0,1,2\}$. Then the semilattice operation (of coordinatewise minimum) on $X\subset \{0,1,2\}^\w$ induces the natural partial order on $X$.

For every $n\in\w$ consider the functions $\mathbf 0_n,\mathbf 1_n\in X$ defined by $$\mathbf 0_n(i)=\begin{cases}0&\mbox{if $i=n$}\\
2&\mbox{otherwise}
\end{cases}\quad \mbox{ and }\quad
\mathbf 1_n(i)=\begin{cases}1&\mbox{if $i=n$}\\
2&\mbox{otherwise}.
\end{cases}
$$
It is clear that $\mathbf 0_n{\cdot}\mathbf 1_n=\mathbf 0_n$ and hence $\mathbf 0_n\le \mathbf 1_n$ for all $n\in\w$.

\begin{theorem}\label{t:main} The semilattice $X$ admits a metrizable  shift-continuous topology $\tau$ such that the set $\{(\mathbf 0_n,\mathbf 1_n):n\in\mathbb N\}$ is dense in the square $X\times X$ of the semitopological semilattice $(X,\tau)$. Since $$\{(\mathbf 0_n,\mathbf 1_n)\}_{n\in\w}\subset P:=\{(x,y)\in X\times X:xy=x\}\ne X\times X$$ the partial order $P$ of $X$ is dense and non-closed in $X\times X$.
\end{theorem}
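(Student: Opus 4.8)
The plan is to apply Theorem~\ref{t:TL}. Since $X$ is a countable semilattice, it suffices to produce a countable set $\dom(\ell)\subset X^\w$ and a function $\ell:\dom(\ell)\to X$ satisfying conditions $(3a)$ and $(3b)$ of that theorem, chosen so that in the resulting topology every point of $X\times X$ is a limit of pairs $(\mathbf 0_n,\mathbf 1_n)$. To this end I would fix a partition $\w=\bigsqcup_{k\in\w}I_k$ into infinitely many infinite sets, write each block as $I_k=\{n^k_0<n^k_1<\cdots\}$, and fix an enumeration $X\times X=\{(x^k,y^k):k\in\w\}$. For every $k$ define two sequences $s^k,t^k\in X^\w$ by $s^k_j=\mathbf 0_{n^k_j}$ and $t^k_j=\mathbf 1_{n^k_j}$, and set $\ell(s^k)=x^k$ and $\ell(t^k)=y^k$. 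Thus $\dom(\ell)=\{s^k,t^k:k\in\w\}$ is countable.

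The verification rests on one elementary computation: for $a\in X$ and a coordinate $p>\max\supp(a)$ one has that $a\cdot\mathbf 0_p$ equals $a$ with its $p$-th coordinate changed to $0$, while $a\cdot\mathbf 1_p$ equals $a$ with its $p$-th coordinate changed to $1$. In particular, as $p\to\infty$ along any block these shifted elements have pairwise distinct (strictly growing) supports, which immediately yields $(3a)$: each value is attained at most finitely often, irrespective of how the limits $x^k,y^k$ were chosen. For $(3b)$ I would compare two sequences and two shifts $a,b\in X$ by inspecting the value at the modified coordinate. The only place where the hypothesis $a\cdot\ell(s)\ne b\cdot\ell(t)$ is used is the case when the two sequences come from the same block $I_k$: in the mixed case ($\mathbf 0$ against $\mathbf 1$) the modified coordinate carries value $0$ on one side and $1$ on the other, so the two shifted elements differ there, whereas in the pure case ($\mathbf 0$ against $\mathbf 0$, or $\mathbf 1$ against $\mathbf 1$) equality at a large coordinate forces $a=b$, which is excluded precisely by $a\cdot\ell(s)\ne b\cdot\ell(t)$. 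For sequences from different blocks the modified coordinates are automatically distinct and the same value-at-coordinate argument applies. Choosing $F$ so large that all indices involved exceed $\max\big(\supp(a)\cup\supp(b)\big)$ then produces the required finite set, so this case analysis is the main (though routine) obstacle.

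Granting $(3a)$ and $(3b)$, Theorem~\ref{t:TL} furnishes a metrizable shift-continuous topology $\tau$ in which $s^k\to x^k$ and $t^k\to y^k$ for every $k$; separate continuity of the semilattice operation is exactly shift-continuity, so $(X,\tau)$ is a semitopological semilattice. Since convergence in $X\times X$ is coordinatewise, the pairs $(\mathbf 0_{n^k_j},\mathbf 1_{n^k_j})_{j\in\w}$, which form a subsequence of $(\mathbf 0_n,\mathbf 1_n)_{n\in\w}$, converge to $(x^k,y^k)$; as $k$ ranges over $\w$ this shows that every point of $X\times X$ lies in the closure of $\{(\mathbf 0_n,\mathbf 1_n):n\in\w\}$, so that set is dense. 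Finally, $\mathbf 0_n\cdot\mathbf 1_n=\mathbf 0_n$ gives $\{(\mathbf 0_n,\mathbf 1_n):n\in\w\}\subset P$, while $(\mathbf 1_0,\mathbf 0_0)\notin P$ shows $P\ne X\times X$; since $P$ contains a dense set, its closure is all of $X\times X\ne P$, whence $P$ is dense and non-closed.
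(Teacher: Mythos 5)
Your proof is correct and follows essentially the same route as the paper: the paper also enumerates $X\times X$ and uses pairwise disjoint infinite index blocks (explicitly $\{2^k3^n:n\in\w\}$ in place of your abstract partition) to define $\ell$ on the corresponding subsequences of $(\mathbf 0_n)$ and $(\mathbf 1_n)$, verifies (3a) in the same ``strong form'' and (3b) by the same support/coordinate comparison, and then invokes Theorem~\ref{t:TL}. The only difference is notational.
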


\begin{proof} Write the countable set $X\times X$ as $X\times X=\{(x_k,y_k):k\in\w\}$. For every $k,n\in\w$ consider the elements $z_{k,n}=\mathbf 0_{2^k3^n}$ and $u_{k,n}:=\mathbf 1_{2^k3^n}$ of the set $X$. These elements form sequences $\vec z_k=(z_{k,n})_{n\in\w}$ and $\vec u_k=(u_{k,n})_{n\in\w}$, which are elements of the set $X^\w$. Let $\dom(\ell)=\{\vec z_k,\vec u_k:k\in\w\}$ and $\ell:\dom(\ell)\to X$ be the function defined by $\ell(\vec z_k)=x_k$ and $\ell(\vec u_k)=y_k$ for  $k\in\w$.

We claim that for the function $\ell$ the condition (3) of Theorem~\ref{t:TL} is satisfied. In fact, the condition (3a) is satisfied in the stronger form: for any $s\in \dom(\ell)$ and $a,b\in X$ the set $\{n\in\w:a{\cdot} s_n=b\}\subset \{n\in\w:\exists k\in\w$ with $2^k3^n\in \supp(b)\}$ is finite.

Now we check the condition (3b). Fix elements $a,b\in X$ and sequences $s,t\in \dom(\lambda)$ such that $a{\cdot}\ell(s)\ne b{\cdot}\ell(t)$. It is easy to see that  $a{\cdot}s_n\ne b{\cdot}t_m$ for any $n,m\in\w\setminus F$ where $F=\{n\in\w:\exists k\in\w$ such that $2^k3^n\in \supp(a)\cup\supp(b)\}$.

By Theorem~\ref{t:TL}, the semilattice $X$ admits a metrizable shift-continuous topology $\tau$ in which every sequence $s\in\dom(\ell)$ converges to $\ell(s)$. In particular, for every $k\in\w$ the sequence $(\mathbf 0_{2^k3^n})_{n\in\w}=\vec z_k$ converges to $x_k=\ell(\vec z_k)$ and the sequence $(\mathbf 1_{2^k3^n})_{n\in\w}=\vec z_k$ converges to $y_k=\ell(\vec u_k)$. Consequently, the set $\{(\mathbf 0_{2^k3^n},\mathbf 1_{2^k3^n}):k,n\in\w\}\subset \{(\mathbf 0_m,\mathbf 1_m):m\in\w\}$ is dense in $X\times X=\{(x_k,y_k):k\in\w\}$. 

Since $$\{(\mathbf 0_n,\mathbf 1_n)\}_{n\in\w}\subset P:=\{(x,y)\in X\times X:xy=x\}\ne X\times X,$$ the partial order $P$ is a dense non-closed subset of $X\times X$. 
\end{proof}

\begin{problem} Does the semilattice $X$ admit a Lawson Hausdorff shift-continuous topology such that the partial order $P=\{(x,y)\in X\times X:xy=x\}$ is not closed (and dense) in $X\times X$?
\end{problem}

\end{document}